\documentclass[hidelinks,onefignum,onetabnum]{siamart251216}
\usepackage{amssymb,amsmath,mathtools,amsfonts}
\usepackage{graphicx,subfig, version, color}
\usepackage{booktabs}
\usepackage{bm}
\usepackage{color}
\usepackage{xcolor}
\usepackage{placeins}
\numberwithin{equation}{section}
\numberwithin{figure}{section}
\numberwithin{table}{section}
\usepackage{framed,multirow}
\newcommand{\R}{\mathbb{R}}
\newcommand{\N}{\mathbb{N}}
\newcommand{\Bx}{\bm{x}}
\newcommand{\Bf}{\bm{f}}

\newcommand{\Bq}{\bm{q}}
\newcommand{\Br}{\bm{r}}
\newcommand{\Bv}{\bm{v}}
\newcommand{\Bu}{\bm{u}}
\newcommand{\Bd}{\bm{d}}
\newcommand{\BA}{\bm{A}}
\newcommand{\BB}{\bm{B}}

\newcommand{\BC}{\bm{C}}

\newcommand{\Be}{\bm{e}}
\newcommand{\By}{\bm{y}}
\newcommand{\Bz}{\bm{z}}

\newcommand{\Bb}{\bm{b}}
\newcommand{\Bc}{\bm{c}}

\newcommand{\BW}{\bm{W}}

\newcommand{\Btheta}{\bm{\theta}}

\newcommand{\calA}{\mathcal{A}}
\newcommand{\calT}{\mathcal{T}}
\newcommand{\calN}{\mathcal{N}}

\newcommand{\calI}{\mathcal{I}}
\newcommand{\hf}{\hat{f}}

\newcommand{\odt}{\mathrm{d}t}

\DeclareMathOperator{\Ker}{Ker}

\newsiamremark{remark}{Remark}
\newsiamremark{hypothesis}{Hypothesis}
\crefname{hypothesis}{Hypothesis}{Hypotheses}
\crefname{equation}{equation}{equations}

\makeatletter
\g@addto@macro\normalsize{%
  \setlength{\abovedisplayskip}{6pt plus 2pt minus 2pt}%
  \setlength{\belowdisplayskip}{7pt plus 2pt minus 2pt}%
  \setlength{\abovedisplayshortskip}{4pt plus 1pt minus 1pt}%
  \setlength{\belowdisplayshortskip}{6pt plus 2pt minus 2pt}%
}
\makeatother

\headers{Kernel Localization and Whole-Trajectory Generalization}{Y. Liu and Y. Gu}

\title{Kernel Localization and Whole-Trajectory Generalization for Linear Multistep Methods in Deep Learning-Based Discovery of Dynamical Systems\thanks{Submitted to the editors DATE.
\funding{This work was funded by NSFC Grant 92370101.}}}

\author{Yaru Liu\thanks{School of Mathematical Sciences, University of Electronic Science and Technology of China, Sichuan 611731, China (\email{yaruliu@std.uestc.edu.cn},\email{yiqigu@uestc.edu.cn}).}
\and Yiqi Gu\footnotemark[2]}

\ifpdf
\hypersetup{
 pdftitle={Kernel Localization and Whole-Trajectory Generalization for Linear Multistep Methods in Deep Learning-Based Discovery of Dynamical Systems},
 pdfauthor={Y. Liu and Y. Gu}
}
\fi

\begin{document}
\sloppy
\maketitle
\begin{abstract}
Linear multistep methods (LMMs) combined with neural-network approximation provide a high-order framework for learning governing vector fields of dynamical systems from discrete trajectory data. This paper studies two issues in LMM-based discovery that are not resolved by existing grid-level convergence theory. First, in non-auxiliary Adams--Bashforth (A-B) and Adams--Moulton (A-M) discovery systems, we observe that zero-residual grid solutions are nonunique but consistent along the trajectory, with differences limited to the boundary layer. We explain this phenomenon through a kernel analysis of the non-auxiliary discovery matrices. Under the corresponding discovery-stability conditions, the differences between zero-residual grid solutions are exponentially localized near the initial indices for A-B schemes, whereas they form two-sided boundary layers near the initial and terminal indices for A-M schemes. Second, we derive whole-trajectory generalization estimates for both auxiliary and non-auxiliary formulations. Once the learned vector field is restricted to a fixed observed trajectory, each component of the error becomes a scalar function of time. For the auxiliary formulation, the trajectory error is $O(h^p)$ under the corresponding grid accuracy, approximation, and trace regularity assumptions. For non-auxiliary formulations, the global estimates contain additional boundary-layer terms. On fixed interior subintervals, these terms are exponentially damped. Numerical experiments illustrate the convergence behavior.
\end{abstract}

\begin{keywords}
dynamical systems, linear multistep methods, kernel localization, generalization error, deep learning
\end{keywords}

\begin{MSCcodes}
65L06, 65L09, 65L20, 65F20, 68T07
\end{MSCcodes}

\section{Introduction}\label{sec:introduction}
Recovering a governing vector field from discrete trajectory data is a basic inverse problem in numerical analysis and scientific computing. Let $d\geq 1$ be the dimension of the state space, and suppose that the observed states are generated by the autonomous system
\begin{equation}\label{eq:intro_ode}
\begin{cases}
\dot{\Bx}(t)=\Bf(\Bx(t)), \qquad 0<t\leq T,\\
\Bx(0)=\Bx_{\rm init},
\end{cases}
\end{equation}
where $\Bx:[0,T]\to\R^d$ is a sufficiently smooth state trajectory, $\Bf:\R^d\to\R^d$ is an unknown vector field, and $\Bx_{\rm init}\in\R^d$ is the initial state. Let $t_n=nh$, $h=T/N$, for $0\leq n\leq N$, and let $\Bx_n=\Bx(t_n)$ denote the corresponding equispaced trajectory samples. The discovery problem is to construct a data-driven approximation of $\Bf$, at least along the observed trajectory. Such inverse problems arise in reduced-order modeling, forecasting, control, and the scientific interpretation of physical and biological systems. From an analytical viewpoint, the problem combines data-driven learning and numerical approximation. The observations are constrained by a differential equation, while the unknown object is a vector-valued function. A convergence theory must account for time discretization, approximation error and algebraic stability.

There is a broad literature on data-driven discovery of dynamical systems. Existing approaches include symbolic regression \cite{Bongard2007,Schmidt2009}, sparse identification methods  \cite{Brunton2016,Rudy2017,Zhang2018}, Gaussian-process and other probabilistic or statistical methods \cite{Kocijan2005,Raissi2017_2,Lu2019}, orthogonal-polynomial and regression-based approaches \cite{Qin2019,Wu2019}, and neural-network-based formulations \cite{Raissi2019,Chen2018,Long2019,Sun2020}. These methods differ in the prior structure imposed on the vector field and in the way observational data are converted into constraints on the unknown dynamics. For trajectory data, the temporal ordering of the samples provides a natural source of additional structure. Instead of estimating derivatives pointwise, one may impose a time-discretization residual on the observed trajectory. This point of view leads to linear multistep methods (LMMs) for discovery, in which the unknown vector field is constrained through a discrete residual equation. The focus of this paper is not the design of a new network architecture or optimization algorithm, but the finite-dimensional algebraic system induced by this LMM residual. In particular, we study how its kernel structure, stability properties, and truncation errors determine the accuracy of the recovered vector field on the grid and along the whole observed trajectory.

To make this discrete algebraic system precise, recall that LMMs are classical high-order time discretizations for initial value problems \cite{Dahlquist1956,Dahlquist1963,Henrici1962,Hairer1993}. In the forward problem, the vector field is known and the multistep formula is used to compute the unknown states. In the inverse setting considered here, the same formula is used in the opposite direction: the observed states are inserted into the scheme, while the values of the unknown vector field at these states are represented by an approximation from a prescribed class. Let $\{\alpha_m\}_{m=0}^M$ and $\{\beta_m\}_{m=0}^M$ be the real coefficients of an $M$-step LMM, with $\alpha_0\neq 0$. Applied to the sampled trajectory, the method gives
\begin{equation}\label{eq:intro_lmm}
\sum_{m=0}^M \alpha_m \Bx_{n-m}=h\sum_{m=0}^M \beta_m \Bf(\Bx_{n-m}),\qquad M\leq n\leq N,
\end{equation}
up to the local truncation residual. We can replace $\Bf$ by a neural network, and more generally, by an element of an admissible approximation class, leading to a residual minimization problem for the governing vector field. This LMM-based viewpoint has been used in several computational works
\cite{Raissi2018_4,Rudy2019,Tipireddy2019,Xie2019}, and its numerical analysis foundations have been developed through stability and convergence studies \cite{Keller2021,Du2022}.

Within the LMM framework, we focus on two classical families: the Adams--Bashforth (A-B) schemes and the Adams--Moulton (A-M) schemes. After the state data are substituted into \eqref{eq:intro_lmm}, the unknowns are the grid values of one component of the vector field at the involved sample states. But, for A-B and A-M schemes, the number of these unknown grid values is larger than the number of LMM equations. Thus, without additional conditions, the corresponding discrete inverse system is underdetermined. To remove this nonuniqueness, the convergence theory in \cite{Du2022} augments the A-B and A-M systems by imposing auxiliary conditions, such as finite difference approximations of selected initial values of the vector field. The resulting augmented system is square and nonsingular, and the values of the learned vector field at the involved grid points are uniquely determined at the discrete level. For one scalar component $f$ of the vector field and an admissible approximation class $\calA$, this theory gives a grid estimate of the form
\begin{equation}\label{eq:intro_grid_estimate}
|\hf_{\calA,h}-f|_{2,h}\leq C\kappa_2(\BA_h)\left(h^p+e_{\calA}\right),
\end{equation}
where $p$ is the order of the LMM, $\hf_{\calA,h}$ is its approximation computed by the methods, $e_{\calA}$ is the approximation error of $\calA$, $\BA_h$ is the augmented discovery matrix, and $\left|\cdot\right|_{2,h}$ denotes the discrete seminorm over the involved sample states. In particular, when $\kappa_2(\BA_h)$ remains uniformly bounded as $h\to 0$, the estimate explains the observed grid accuracy of stable LMM-based discovery methods.

The auxiliary conditions are useful for the stability analysis, but they modify the algebraic system to be solved. In particular, they select a unique grid vector by adding information that is not contained in the original LMM residual. If these auxiliary conditions are not imposed, the A-B and A-M discovery systems have infinitely many grid solutions. However, we observe (see \Cref{fig:A-B--A-M}) that their behavior is more ordered than what simple nonuniqueness of the solution would imply. For A-B schemes, the discrepancy in the recovered grid vector is usually localized near the first few time levels, while the later values agree well with the target vector field. For A-M schemes, a similar phenomenon occurs, but the discrepancy may appear near both the initial and terminal indices. These observations suggest that the kernel of the corresponding underdetermined discovery matrix has a boundary layer structure.
\begin{figure}[htbp]
\centering
\includegraphics[width=0.89\textwidth]{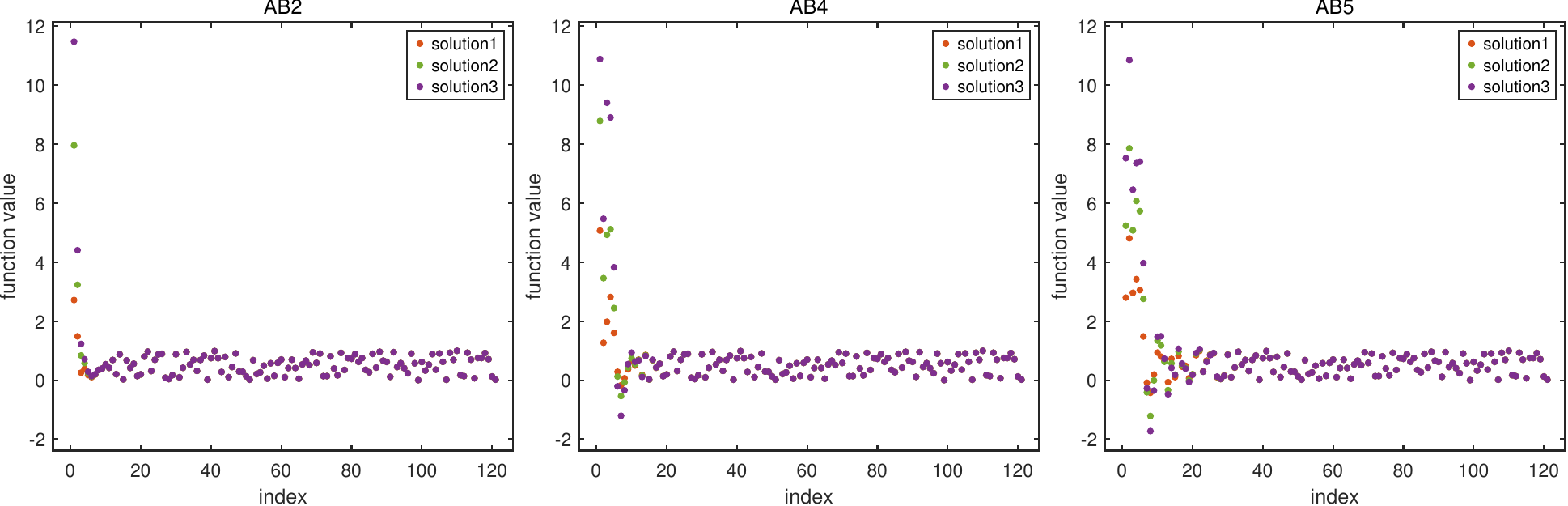}\\[6pt]
\includegraphics[width=0.89\textwidth]{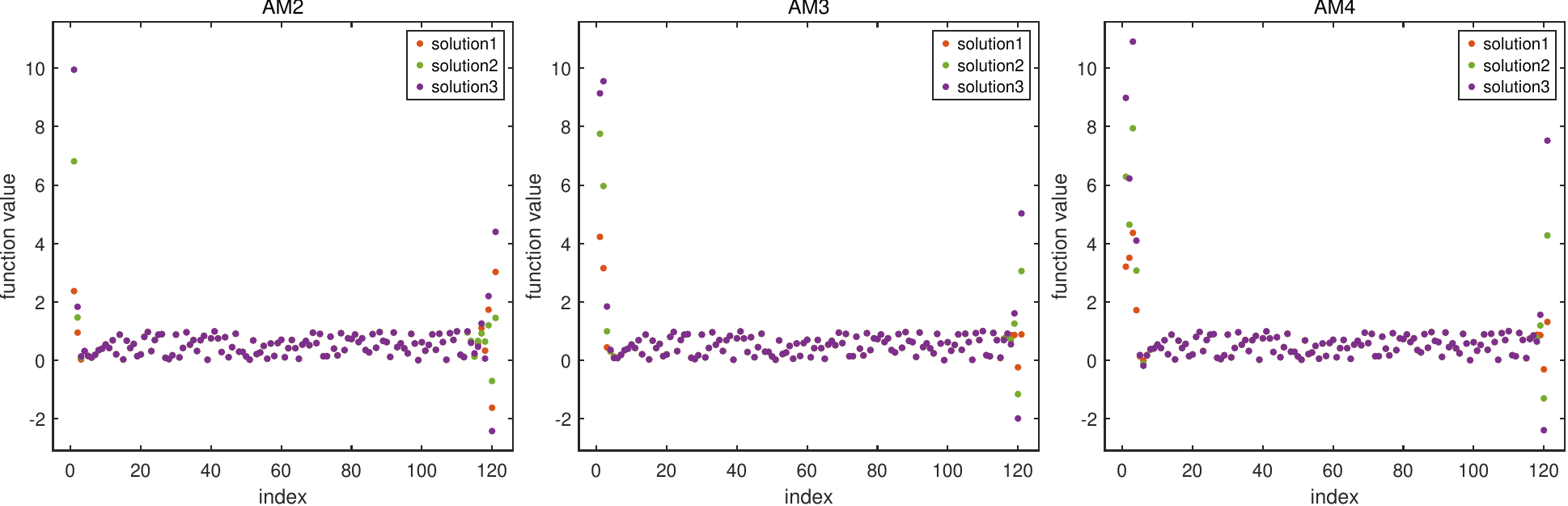}
\caption{Boundary-layer localization of nonuniqueness in non-auxiliary A-B and A-M discovery systems. To illustrate the algebraic kernel structure, we use the scalar time-dependent relation $\dot x=f(t)$ on $[0,1]$, with randomly sampled values of $f$ on the uniform grid. This simplification affects only the presentation of the example, since the kernel localization depends on the LMM discovery matrix. For each scheme, three representative zero-residual solutions are plotted.}
\label{fig:A-B--A-M}
\end{figure}

These observations motivate the first main result of the paper: an analysis of the kernels of the non-auxiliary A-B and A-M discovery matrices. In \cite{Du2022}, the authors proposed that the favorable empirical behavior of the non-auxiliary neural-network formulation is related to the implicit regularization during the training process. Our analysis shows that this phenomenon is closely related to the special algebraic structure of the A-B and A-M discovery matrices. More precisely, the nonuniqueness of the non-auxiliary systems is determined by the boundary-layer structure of the kernel vectors. Therefore, two zero-residual grid solutions to the same residual equation may look almost identical away from the boundary indices. For A-B schemes satisfying the discovery-stability condition associated with the A-B discovery polynomial, every kernel vector is exponentially localized near the initial indices. In a representative form, if $\Bd=(d_0,\ldots,d_{N-1})^\top$ belongs to the kernel of the non-auxiliary $M$-step A-B discovery matrix ($M\geq 2$), there exist constants $C>0$ and $\theta\in(0,1)$, independent of $N$ and $h$, such that
\begin{equation*}
|d_i|\leq C\theta^i\sum_{j=0}^{M-2}|d_j|,\qquad 0\leq i\leq N-1.
\end{equation*}
Thus, the difference between two non-auxiliary A-B numerical solutions is determined by their first few components and decays exponentially within the time interval. For A-M schemes satisfying the corresponding discovery-stability condition, the corresponding estimate has a two-sided form
\begin{equation*}
|d_i|\leq C\left(\theta^i E_0+\theta^{N-i}E_T\right),\qquad 0\leq i\leq N,
\end{equation*}
where $E_0=\sum_{j=0}^{M-1}|d_j|$ and $E_T=\sum_{j=N-M+1}^{N}|d_j|$ measure the left and right endpoint data, respectively.

The second contribution is to lift grid-level estimates to continuous-time estimates along the fixed observed trajectory. The estimate \eqref{eq:intro_grid_estimate} controls the error only at the sampled states, whereas the learned vector field is evaluated along the continuous trajectory. Once the trajectory $\Bx(t)$ is fixed, the error
\begin{equation*}
g_h(t)=\hf_{\calA,h}(\Bx(t))-f(\Bx(t))
\end{equation*}
is a scalar function of the single variable $t\in[0,T]$. Thus, the transformation from grid error to trajectory error is essentially a one-dimensional sampling problem on the observed trajectory, rather than a high-dimensional statistical learning problem over the full state space. We use deterministic sampling inequalities to control trajectory norms of $g_h$ by its grid values and Sobolev seminorms of the trace error. For the auxiliary formulation, this gives
\begin{equation*}
\int_0^T |\hf_{\calA,h}(\Bx(t))-f(\Bx(t))|~\odt
\leq C\Big(\kappa_2(\BA_h)(h^p+e_{\calA})+h^p\tilde{R}\Big).
\end{equation*}
Consequently, if $\kappa_2(\BA_h)$ is uniformly bounded, $e_{\calA}=O(h^p)$, and $\tilde{R}=O(1)$, then the whole-trajectory error is also of order $O(h^p)$. For the non-auxiliary formulation, combining the sampling estimates with the boundary-layer analysis yields trajectory error bounds with additional endpoint-layer contributions. In the A-B case, the global estimate contains a left boundary-layer term of size $O(hE_0)$, while in the A-M case it contains two endpoint-layer terms of size $O(h(E_0+E_T))$. These terms may limit the global order when the boundary-layer errors are merely bounded. However, on fixed interior subintervals, they are multiplied by exponentially small factors of the form $\theta^{\delta/h}$ and can be absorbed into the $O(h^p)$ terms. Thus, the interior trajectory error retains the high-order behavior of the LMM.

The remainder of the paper is organized as follows. In \Cref{sec:prelim}, we introduce the LMM discovery system and the notation used throughout the paper. In \Cref{sec:kernel}, we study the kernel structure of the non-auxiliary A-B and A-M discovery matrices. In \Cref{sec:generalization}, we establish whole-trajectory generalization estimates for the auxiliary and non-auxiliary A-B and A-M schemes. In \Cref{sec:numerics}, numerical experiments are presented to illustrate the theoretical results. Finally, \Cref{sec:conclusion} concludes the paper.

\section{Preliminaries and notation}\label{sec:prelim}
In this section, we fix the notation used in the rest of the paper. We recall the LMM formulation of dynamics discovery on a single trajectory and distinguish the non-auxiliary and auxiliary algebraic systems. Since the components of the vector field enter the LMM residual independently, the analysis below is written for one scalar component of the governing vector field.

\subsection{Trajectory data}
We consider the system \eqref{eq:intro_ode}. The observed trajectory is denoted by
\begin{equation*}
\calT=\{\Bx(t):0\leq t\leq T\}.
\end{equation*}
Let $N\in\N$, $h=T/N$, and $t_n=nh$ for $0\leq n\leq N$. The available data are the equidistant samples
\begin{equation*}
\Bx_n=\Bx(t_n),\qquad n=0,1,\ldots,N.
\end{equation*}
For a scalar component $f$ of $\Bf$, we write $f_n$ for the discrete unknown intended to approximate $f(\Bx_n)$. Since the LMM residual is imposed componentwise, all vector-valued estimates can be obtained by applying the scalar analysis to each component of $\Bf$.

\subsection{The LMM discovery system}
Let $\{\alpha_m\}_{m=0}^M$ and $\{\beta_m\}_{m=0}^M$ be the coefficients of an $M$-step LMM. Substituting the observed states into the LMM formula gives the scalar discovery equations
\begin{equation}\label{eq:lmm_discrete}
h\sum_{m=0}^M \beta_m f_{n-m}=\sum_{m=0}^M \alpha_m x_{n-m},\qquad n=M,\ldots,N,
\end{equation}
where $x_n$ denotes the state component corresponding to the scalar vector-field component $f$. 

As in \cite{Du2022}, let $s$ and $e(N)$ be the first and last indices for which $f_s,\ldots,f_{e(N)}$ appear in \eqref{eq:lmm_discrete}. We denote the corresponding set of involved indices by
\begin{equation*}
\calI_h=\{s,s+1,\ldots,e(N)\},
\end{equation*}
and set
\begin{equation*}
t(N)=|\calI_h|=e(N)-s+1.
\end{equation*}
For an $M$-step A-B scheme, one has
\begin{equation*}
s=0,\qquad e(N)=N-1,\qquad t(N)=N,
\end{equation*}
whereas for an $M$-step A-M scheme,
\begin{equation*}
s=0,\qquad e(N)=N,\qquad t(N)=N+1.
\end{equation*}
Define the vector of involved grid values by
\begin{equation*}
\Bf_h=\begin{bmatrix}f_s&f_{s+1}&\cdots&f_{e(N)}\end{bmatrix}^\top\in\R^{t(N)}.
\end{equation*}
The right-hand side vector is defined by
\begin{equation*}
\Bq_h=h^{-1}\left[\sum_{m=0}^M \alpha_m x_{M-m}\quad\sum_{m=0}^M \alpha_m x_{M+1-m}\quad \cdots \quad \sum_{m=0}^M \alpha_m x_{N-m}\right]^\top\in\R^{N-M+1}.
\end{equation*}
Then \eqref{eq:lmm_discrete} can be written as
\begin{equation}\label{eq:Bh_system}
\BB_h\Bf_h=\Bq_h,
\end{equation}
where 
\begin{equation*}
\BB_h=
\begin{bmatrix}
\beta_{M-s}&\beta_{M-s-1}&\cdots&\beta_{N-e(N)}&&&\\
&\beta_{M-s}&\beta_{M-s-1}&\cdots&\beta_{N-e(N)}&&\\
&&\ddots&\ddots&\ddots&\ddots&\\
&&&\beta_{M-s}&\beta_{M-s-1}&\cdots&\beta_{N-e(N)}
\end{bmatrix}
\end{equation*}
$\in\R^{(N-M+1)\times t(N)}$ is the non-auxiliary discovery matrix. The number of equations is $N-M+1$, while the number of unknown grid values is $t(N)$. For A-B schemes, the difference $t(N)-(N-M+1)=M-1$, so the system is underdetermined when $M\geq2$ and determined when $M=1$. For A-M schemes, the difference is $t(N)-(N-M+1)=M$, so the non-auxiliary A-M system is underdetermined for $M\geq1$.

\subsection{Non-auxiliary and auxiliary formulations}
We introduce the concept of fully connected neural networks (FNNs) and present network-based LMMs for non-auxiliary and auxiliary formulations for the discovery of dynamics from a trajectory. Let $\Omega_{\calT}\subset\R^d$ be a fixed neighborhood of $\calT$. Then FNN with depth $L$ and width $W$ is a parametrized function $u_{\Btheta}:\Omega_{\calT}\to\R$ of the form
\begin{equation*}
u_{\Btheta}(\Bz)=\BW_L\sigma(\BW_{L-1}\sigma(\dots\sigma(\BW_1\Bz+\Bb_1)\dots)+\Bb_{L-1})+\Bb_L, \quad \Bz\in\Omega_{\calT},
\end{equation*}
where $\BW_1\in\R^{W\times d}$, $\BW_2,\ldots,\BW_{L-1}\in\R^{W\times W}$, $\BW_{L}\in\R^{1\times W}$ are weights; $\Bb_1,\ldots,\Bb_{L-1}\in\R^{W\times 1}$, $\Bb_L\in\R$ are biases; $\sigma$ is some activation function which is applied entry-wise to a vector to obtain another vector of the same size; $\Btheta=\{\BW_\ell,\Bb_\ell\}_{\ell=1}^L$ is the set of all free parameters. 
Let $\Theta$ be the corresponding parameter space. We denote the corresponding network class by $\calN=\{u_{\Btheta}:\Omega_{\calT}\to\R, ~\Btheta\in\Theta\}$.

In the LMM formulation, a candidate function enters the residual only through its values at the sampled states $\Bx_s,\ldots,\Bx_{e(N)}$. Thus, the algebraic structure of the discrete problem depends only on the finite vector of grid values, not on the particular parametrization of the candidate function. Therefore, we formulate the non-auxiliary and auxiliary problems for a general admissible class $\calA\subset C(\Omega_{\calT})$. The neural-network case corresponds to $\calA=\calN$.

For $u\in\calA$, define its vector of involved grid values by
\begin{equation*}
\Bu=\begin{bmatrix}u(\Bx_s)&u(\Bx_{s+1})&\cdots&u(\Bx_{e(N)})\end{bmatrix}^\top\in\R^{t(N)}.
\end{equation*}
The non-auxiliary residual loss is obtained by replacing $\Bf_h$ in \eqref{eq:Bh_system} with $\Bu$:
\begin{equation*}
J_h(u)=\frac{1}{N-M+1}\|\BB_h\Bu-\Bq_h\|_2^2.
\end{equation*}
Equivalently,
\begin{equation}\label{eq:Jh_def}
J_h(u)=\frac{1}{N-M+1}\sum_{n=M}^N\left|\sum_{m=0}^M\beta_m u(\Bx_{n-m})-\sum_{m=0}^M h^{-1}\alpha_m x_{n-m}\right|^2.
\end{equation}
A non-auxiliary minimizer over $\calA$ is any element of $\operatorname*{argmin}_{u\in\calA}J_h(u)$. When $\BB_h$ has a nontrivial kernel, different grid-value vectors may produce the same zero LMM residual. This discrete nonuniqueness is the source of the boundary-layer phenomena studied below. The kernel analysis focuses on the grid-level nonuniqueness of zero-residual solutions. When the residual is not zero, the same algebraic estimates enter through the forced residual terms.

To remove the nonuniqueness of the involved grid values, we introduce auxiliary conditions and build an augmented loss function based on \eqref{eq:Jh_def}. Define $N_a=t(N)-(N-M+1)$. Let $\hat\BB_h\in\R^{N_a\times t(N)}$ and $\Bc_h\in\R^{N_a}$ encode the selected auxiliary equations. For the one-sided finite-difference auxiliary conditions used in \cite{Du2022}, $\hat\BB_h$ selects the first $N_a$ involved grid values and $\Bc_h$ contains the corresponding finite-difference approximations. More explicitly, for suitable finite-difference coefficients $\gamma_0,\ldots,\gamma_p$, the auxiliary equations are
\begin{equation}\label{eq:fd_aux_detailed}
u(\Bx_{s+r})=h^{-1}\sum_{m=0}^p\gamma_m x_{s+r+m},\qquad r=0,\ldots,N_a-1.
\end{equation}
The coefficients are chosen so that \eqref{eq:fd_aux_detailed} has local error $O(h^p)$ when evaluated on the exact trajectory. Set
\begin{equation*}
\BA_h=\begin{bmatrix}\hat\BB_h\\ \BB_h\end{bmatrix},\qquad \Bd_h=\begin{bmatrix}\Bc_h\\ \Bq_h\end{bmatrix}.
\end{equation*}
The auxiliary loss is defined by
\begin{equation*}
J_{a,h}(u)=\frac{1}{t(N)}\|\BA_h\Bu-\Bd_h\|_2^2.
\end{equation*}
For an admissible class $\calA$, let $\hf_{\calA,h}$ be a global minimizer of $J_{a,h}$. The grid estimate proved in \cite{Du2022} states that, under the smoothness and approximation assumptions described there,
\begin{equation*}
|\hf_{\calA,h}-f|_{2,h}\leq C\kappa_2(\BA_h)(h^p+e_{\calA}),
\end{equation*}
where $p$ is the order of the LMM and $e_{\calA}$ satisfies $e_{\calA}>\inf_{u\in\calA}\sup_{\Bz\in\Omega_{\calT}}|u(\Bz)-f(\Bz)|$; namely, $e_{\calA}$ is the approximation error bound between $\calA$ and $f$. Here, $|\cdot|_{2,h}$ denotes the empirical grid seminorm over the
sample states involved in the LMM, i.e., $|g|_{2,h}=\left({t(N)}^{-1}\sum_{n=s}^{e(N)}|g(\Bx_n)|^2\right)^{\frac 12}$ for all $g\in C(\calT)$. We will generalize this grid estimate from the sampled points to the whole continuous trajectory in \Cref{sec:generalization}.

\section{Kernel localization for non-auxiliary LMM systems}\label{sec:kernel}
For A-B schemes with $M\geq 2$ and A-M schemes with $M\geq 1$, the non-auxiliary system \eqref{eq:Bh_system} is underdetermined. However, as observed in \Cref{fig:A-B--A-M}, this nonuniqueness is not distributed over the whole time grid. It is mainly confined to the left boundary for A-B schemes, and to both endpoints for A-M schemes, while the recovered grid values in the interior are nearly unchanged. In this section, we explain this phenomenon from the viewpoint of the kernel of the corresponding discovery matrix. Since the difference between any two grid solutions of the non-auxiliary system belongs to this kernel, the spatial distribution of the nonuniqueness is determined by the structure of its kernel vectors. We prove that, for A-B schemes, these vectors decay exponentially away from the initial indices, whereas for A-M schemes they decay exponentially away from both endpoints into the interior. We first present an elementary spectral estimate that will be used in the kernel analysis of the non-auxiliary A-B and A-M discovery matrices.

\subsection{A spectral estimate}

We first present the spectral decomposition used in the following kernel analysis. Let $\BC\in\R^{q\times q}$, and let $\sigma(\BC)\subset\mathbb C$ denote the spectrum of $\BC$. When $\BC$ has no eigenvalue on the unit circle, i.e., $\sigma(\BC)\cap\{\lambda\in\mathbb C:|\lambda|=1\}=\emptyset$, the spectrum is decomposed into the stable and unstable parts
\begin{equation*}
\sigma_s=\{\lambda\in\sigma(\BC):|\lambda|<1\},\qquad \sigma_u=\{\lambda\in\sigma(\BC):|\lambda|>1\}.
\end{equation*}
Define $E_s$ and $E_u$ as the real subspaces generated by the real and imaginary parts of generalized eigenvectors associated with eigenvalues in $\sigma_s$ and $\sigma_u$, respectively. Then
\begin{equation*}
\R^q=E_s\oplus E_u.
\end{equation*}

With this direct-sum decomposition, every vector $x\in\R^q$ can be written uniquely as $x=x_s+x_u$, where $x_s\in E_s$ and $x_u\in E_u$. We define the associated projections $P_s$ and $P_u$ by
\begin{equation*}
P_sx=x_s,\qquad P_ux=x_u.
\end{equation*}
Thus, $P_s$ extracts the stable component of $x$, while $P_u$ extracts the unstable component of $x$.

Moreover, since each generalized eigenspace of $\BC$ is invariant under $\BC$, the subspaces $E_s$ and $E_u$ are invariant under $\BC$, i.e., $\BC E_s\subset E_s$ and $\BC E_u\subset E_u$. Therefore, for every $v_s\in E_s$ and $v_u\in E_u$, we can define the restricted operators $\BC|_{E_s}:E_s\to E_s$ and $\BC|_{E_u}:E_u\to E_u$ by
\begin{equation*}
(\BC|_{E_s})v_s=\BC v_s,\qquad (\BC|_{E_u})v_u=\BC v_u.
\end{equation*}
If $E_u\neq\{0\}$, then all eigenvalues of $\BC|_{E_u}$ satisfy $|\lambda|>1$. Hence  $\BC|_{E_u}$ is invertible on $E_u$.


The following elementary estimate translates the spectral stability of a matrix into exponential decay estimates. These estimates will be used to prove the boundary localization of the nonuniqueness.

\begin{lemma}\label{lem:spectral_power}
Let $\BC\in\R^{q\times q}$. Suppose that for some $\rho\geq0$, $\sigma(\BC)\subset \{\lambda\in\mathbb C:|\lambda|\leq \rho\}$. Then, for every $\theta>\rho$, there exists a constant $K>0$ independent of $n$ such that
\begin{equation*}
\|\BC^n\|_2\leq K\theta^n,\qquad n=0,1,\ldots.
\end{equation*}

Moreover, assume $\sigma(\BC)\cap \{\lambda\in\mathbb C:|\lambda|=1\}=\emptyset$ and $E_u\neq\{0\}$. Set $\max\emptyset:=0$ and define $\hat{\rho}=\max\left\{\max_{\lambda\in\sigma_s}|\lambda|,\max_{\lambda\in\sigma_u}|\lambda|^{-1}\right\}<1$. Then, for every $\theta\in(\hat{\rho},1)$, there exists a constant $\tilde{K}>0$ independent of $n$ such that
\begin{equation*}
\|\BC^nP_s\|_2+\|(\BC|_{E_u})^{-n}P_u\|_2\leq \tilde{K}\theta^n,\qquad n=0,1,\ldots.
\end{equation*}
\end{lemma}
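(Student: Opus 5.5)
The first claim can be obtained either from Gelfand's formula or, more transparently, from the Jordan form. I would follow the Jordan route, because it yields an explicit constant and can be reused for the second claim.

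Write $\BC=\bm{S}\bm{J}\bm{S}^{-1}$ with $\bm{J}$ block diagonal, with Jordan blocks $\lambda I+\bm{N}$, where $\bm{N}$ is nilpotent of size at most $q$. For $n\geq q$,
\begin{equation*}
(\lambda I+\bm{N})^n=\sum_{k=0}^{q-1}\binom{n}{k}\lambda^{n-k}\bm{N}^k .
\end{equation*}
Its norm is bounded by $C n^{q-1}\rho^{n-q+1}$ when $\rho>0$. When $\rho=0$ the block is nilpotent and the power vanishes for $n\geq q$.

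Since $\theta>\rho$, we have $n^{q-1}(\rho/\theta)^n\to 0$, so $\sup_n n^{q-1}\rho^n\theta^{-n}<\infty$. The finitely many small $n$ are absorbed into the constant. Hence $\|\BC^n\|_2\leq \|\bm{S}\|_2\|\bm{S}^{-1}\|_2\max_{\text{blocks}}\|(\lambda I+\bm{N})^n\|_2\leq K\theta^n$. This is a routine step.

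For the second claim, I would reduce to the first part by working on the invariant subspaces. Choose a real basis of $E_s$ (columns of $\bm{V}_s$) and of $E_u$ (columns of $\bm{V}_u$). Invariance gives $\BC\bm{V}_s=\bm{V}_s\BC_s$ and $\BC\bm{V}_u=\bm{V}_u\BC_u$, where $\BC_s$ and $\BC_u$ are real square matrices. The spectrum of $\BC_s$ is $\sigma_s$ and the spectrum of $\BC_u$ is $\sigma_u$.

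The projections have the form $P_s=\bm{V}_s\bm{L}_s$ and $P_u=\bm{V}_u\bm{L}_u$, where $[\bm{L}_s;\bm{L}_u]=[\bm{V}_s\ \bm{V}_u]^{-1}$. This inverse exists because $\R^q=E_s\oplus E_u$. It follows that
\begin{equation*}
\BC^nP_s=\bm{V}_s\BC_s^n\bm{L}_s,\qquad (\BC|_{E_u})^{-n}P_u=\bm{V}_u\BC_u^{-n}\bm{L}_u .
\end{equation*}

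Now $\sigma(\BC_s)$ lies in the disk $|\lambda|\leq\max_{\sigma_s}|\lambda|\leq\hat\rho$. Also $\sigma(\BC_u^{-1})=\{\lambda^{-1}:\lambda\in\sigma_u\}$ lies in the disk $|\lambda|\leq\hat\rho$. Applying the first part to $\BC_s$ and to $\BC_u^{-1}$ with $\theta>\hat\rho$ gives bounds $K_s\theta^n$ and $K_u\theta^n$. Multiplying by the fixed norms of $\bm{V}$ and $\bm{L}$ then gives $\tilde K$.

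If $E_s=\{0\}$, then $P_s=0$ and that term is trivially zero. This is consistent with the convention $\max\emptyset=0$.

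The main point needing care is the identification of the spectra of the restrictions on the real subspaces when there are complex conjugate eigenvalue pairs. This holds because $E_s$ is the real part of the sum of complex generalized eigenspaces for $\sigma_s$, and that sum is closed under conjugation. Consequently the complexification of $\BC_s$ is $\BC$ restricted to that complex invariant subspace, whose eigenvalues are exactly $\sigma_s$. The same argument applies to $E_u$, and invertibility of $\BC_u$ follows since $0\notin\sigma_u$.
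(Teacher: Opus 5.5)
Your proof is correct and follows essentially the same route as the paper. Part one uses a Jordan-form bound on $\|\BC^n\|_2$; part two applies that bound to the restrictions $\BC|_{E_s}$ and $(\BC|_{E_u})^{-1}$ and multiplies by the fixed norms of the projections. Your explicit representation $P_s=\bm{V}_s\bm{L}_s$ and the complexification argument for $\sigma(\BC_s)=\sigma_s$ make precise a step the paper leaves implicit.
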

\begin{proof}
Let $\BC=SJS^{-1}$ be a Jordan decomposition of $\BC$, where $J$ is the Jordan form of $\BC$ and $S$ is nonsingular. For each Jordan block $J_\lambda$ of $J$ associated with an eigenvalue $\lambda\in\sigma(\BC)$, suppose that $J_\lambda$ has size $m$. Then, $J_\lambda=\lambda I_m+Q$, where $Q$ is a nilpotent matrix with $Q^m=0$. Hence, for every $n\in\{0,1,\ldots\}$, we have
\begin{equation*}
J_\lambda^n=\sum_{\ell=0}^{\min\{n,m-1\}}\binom{n}{\ell}\lambda^{n-\ell}Q^\ell .
\end{equation*}
Since $|\lambda|\leq \rho<\theta$, for each fixed $\ell=0,\ldots,m-1$ and all $n\geq \ell$, the sequence $\binom{n}{\ell}|\lambda|^{n-\ell}\theta^{-n}$ is bounded. Thus, for every Jordan block, we have
\begin{equation*}
\|J_\lambda^n\|_2\leq \left(\sum_{\ell=0}^{m-1}\sup_{n\geq \ell}\binom{n}{\ell}|\lambda|^{n-\ell}\theta^{-n}\|Q^\ell\|_2\right)\theta^n,\qquad n=0,1,\ldots.
\end{equation*}
Since $J$ contains a finite number of Jordan blocks, it follows that $\|J^n\|_2\leq \hat{K}\theta^n$, where $\hat{K}$ only depends on the matrix $\BC$. Therefore, there exists a constant $K$ independent of $n$ such that
\begin{equation}\label{eq:lemma3.1-1}
\|\BC^n\|_2\leq \|S\|_2\|S^{-1}\|_2\|J^n\|_2\leq K\theta^n.
\end{equation}

For the next estimate, if $E_u\neq\{0\}$, then $\BC|_{E_u}$ is invertible and $\sigma((\BC|_{E_u})^{-1})=\{\lambda^{-1}:\lambda\in\sigma(\BC),\ |\lambda|>1\}$. Thus, by the definition of $\hat{\rho}$, we have
\begin{equation*}
r(\BC|_{E_s})=\max_{\lambda\in\sigma_s}|\lambda|\leq \hat{\rho}<\theta \quad \text{and} \quad r((\BC|_{E_u})^{-1})=\max_{\lambda\in\sigma_u}|\lambda|^{-1}\leq \hat{\rho}<\theta,
\end{equation*}
where $r(\cdot)$ denotes the spectral radius. Therefore, applying \eqref{eq:lemma3.1-1} to $\BC|_{E_s}$ and $(\BC|_{E_u})^{-1}$, there exist constants $K_s,K_u>0$ such that
\begin{equation*}
\|(\BC|_{E_s})^n\|_2\leq K_s\theta^n,\qquad \|(\BC|_{E_u})^{-n}\|_2\leq K_u\theta^n.
\end{equation*}
Finally, since $\|P_s\|_2$ and $\|P_u\|_2$ only depend on the matrix $\BC$, we obtain
\begin{equation*}
\|\BC^nP_s\|_2+\|(\BC|_{E_u})^{-n}P_u\|_2\leq \tilde{K}\theta^n,\qquad n=0,1,2,\ldots,
\end{equation*}
where $\tilde{K}$ is independent of $n$.
\end{proof}

\subsection{Adams--Bashforth schemes}
We first consider the non-auxiliary discovery system generated by an $M$-step A-B scheme. Since $\beta_0=0$, the unknown grid vector is $\Bf_h=(f_0,f_1,\ldots,f_{N-1})^\top$. In this case, \eqref{eq:Bh_system} takes the form
\begin{equation}\label{eq:ab_equations}
\sum_{m=1}^M \beta_m f_{n-m}=q_n,\qquad n=M,\ldots,N.
\end{equation}
The relevant discovery polynomial is
\begin{equation}\label{eq:p_ab}
p_{\rm AB}(z)=\beta_1z^{M-1}+\beta_2z^{M-2}+\cdots+\beta_{M-1}z+\beta_M.
\end{equation}
We say that the A-B discovery system satisfies the discovery-stability condition if 
\begin{equation}\label{eq:ab_discovery_stability}
\rho_{\rm AB}:=\max\{|\lambda|:\lambda\in\mathbb C,\ p_{\rm AB}(\lambda)=0\}<1.
\end{equation}

Let $\Bf_h^{(1)}$ and $\Bf_h^{(2)}$ be any two grid solutions of the non-auxiliary A-B system \eqref{eq:ab_equations}, and define their difference by
\begin{equation*}
\Bd=\Bf_h^{(1)}-\Bf_h^{(2)}=(d_0,\ldots,d_{N-1})^\top .
\end{equation*}
Subtracting the two systems yields
\begin{equation}\label{eq:ab_kernel_equations}
\sum_{m=1}^M \beta_m d_{n-m}=0,\qquad n=M,\ldots,N.
\end{equation}
Equivalently, this homogeneous system \eqref{eq:ab_kernel_equations} can be written as $\BB_h \Bd=0$. Thus $\Bd\in\Ker(\BB_h)$. Therefore, the kernel of $\BB_h$ describes the possible differences between grid solutions. 

The following result shows that the difference between any two grid solutions of the non-auxiliary A-B system is determined by the first few components and decays exponentially away from the initial indices.

\begin{theorem}\label{thm:ab_kernel}
For the A-B scheme with $M=1$, $\Ker(\BB_h)=\{0\}$. Let $M\geq 2$ and suppose that the A-B discovery-stability condition \eqref{eq:ab_discovery_stability} holds. Then, for every $\theta\in(\rho_{\rm AB},1)$, there exists a constant $C>0$, independent of $N$, such that every $\Bd=(d_0,\ldots,d_{N-1})^\top\in\Ker(\BB_h)$ satisfies
\begin{equation}\label{eq:ab_kernel_decay}
|d_i|\leq C\theta^i\sum_{j=0}^{M-2}|d_j|,\qquad i=0,\ldots,N-1.
\end{equation}
\end{theorem}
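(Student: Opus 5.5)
The plan is to read the kernel equations \eqref{eq:ab_kernel_equations} as a linear recurrence of order $M-1$ and write it in companion-matrix form, so that the first part of \Cref{lem:spectral_power} gives the decay. The first step is to check that $\beta_1\neq 0$ for every A-B scheme; this holds because $\beta_1$ is the leading nonzero Adams--Bashforth coefficient, e.g. $\beta_1=1,\tfrac32,\tfrac{23}{12},\ldots$. The same fact also makes the degree of $p_{\rm AB}$ exactly $M-1$, so that $\rho_{\rm AB}$ is well defined.

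\textbf{The case $M=1$.} The equations read $\beta_1 d_{n-1}=0$ for $n=1,\ldots,N$. Since $\beta_1\neq0$, this gives $d_0=\cdots=d_{N-1}=0$, and hence $\Ker(\BB_h)=\{0\}$.

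\textbf{The case $M\geq2$.} Solve \eqref{eq:ab_kernel_equations} for the newest index:
\begin{equation*}
d_{n-1}=-\beta_1^{-1}\sum_{m=2}^M\beta_m d_{n-m},\qquad n=M,\ldots,N.
\end{equation*}
This determines $d_{M-1},\ldots,d_{N-1}$ recursively from $d_0,\ldots,d_{M-2}$. Introduce the state vectors $\By_k=(d_{k+M-2},d_{k+M-3},\ldots,d_k)^\top\in\R^{M-1}$ for $0\le k\le N-M+1$. The recurrence becomes $\By_{k+1}=\BC\By_k$, where $\BC\in\R^{(M-1)\times(M-1)}$ is the companion matrix. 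Its first row is $(-\beta_2/\beta_1,\ldots,-\beta_M/\beta_1)$, and it has ones on the subdiagonal.

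The characteristic polynomial of $\BC$ is $\beta_1^{-1}p_{\rm AB}(z)$, a standard companion-matrix computation (expand along the first row, or check that $(\lambda^{M-2},\ldots,1)^\top$ is an eigenvector exactly when $p_{\rm AB}(\lambda)=0$). Hence $\sigma(\BC)$ lies in $\{|\lambda|\le\rho_{\rm AB}\}$. For a given $\theta\in(\rho_{\rm AB},1)$, the first part of \Cref{lem:spectral_power} then supplies a constant $K$, depending only on $\BC$ and $\theta$ and therefore independent of $N$, such that $\|\BC^k\|_2\le K\theta^k$ for all $k\ge0$.

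\textbf{Conclusion.} For an index $i\ge M-2$, put $k=i-M+2$. Then $d_i$ is the first entry of $\By_k=\BC^k\By_0$, so
\begin{equation*}
|d_i|\le\|\BC^k\|_2\|\By_0\|_2\le K\theta^{k}\sum_{j=0}^{M-2}|d_j|=K\theta^{-(M-2)}\theta^i\sum_{j=0}^{M-2}|d_j|.
\end{equation*}
For $i\le M-2$, the bound is trivial: $|d_i|\le\theta^{-(M-2)}\theta^i\sum_j|d_j|$. Taking $C=\max\{K,1\}\,\theta^{-(M-2)}$ proves \eqref{eq:ab_kernel_decay}.

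The only step that needs care is identifying the characteristic polynomial of $\BC$ with $p_{\rm AB}$, including the correct ordering of the coefficients and the fact $\beta_1\neq0$. Everything else follows directly from \Cref{lem:spectral_power}. Note that the constant is uniform in $N$ because $\BC$ does not depend on $N$.
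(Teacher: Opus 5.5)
Your proof is correct and follows essentially the same route as the paper: you rewrite the kernel equations as an order-$(M-1)$ recurrence with a companion matrix whose characteristic polynomial is $p_{\rm AB}/\beta_1$, and then use the first part of \Cref{lem:spectral_power} to bound $\By_k=\BC^k\By_0$ geometrically, uniformly in $N$. The only difference is cosmetic: you order the state vector newest-first, so $d_i$ is the first rather than the last entry of $\By_{i-M+2}$, and you arrive at the same constant $C=\max\{K,1\}\theta^{-(M-2)}$.
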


\begin{proof}
Note that $\beta_1\neq 0$ in the A-B schemes. For $M=1$ and $\forall~\Bd=(d_0,\ldots,d_{N-1})^\top\in\Ker(\BB_h)$, according to \eqref{eq:ab_kernel_equations} and $\beta_1\neq 0$, we obtain $\beta_1d_{n-1}=0$ for $n=1,\ldots,N$. Thus $\Bd=0$. 

For $M\geq 2$, \eqref{eq:ab_kernel_equations} gives
\begin{equation*}
d_{k+M-1}=-\frac{\beta_2}{\beta_1}d_{k+M-2}-\frac{\beta_3}{\beta_1}d_{k+M-3}-\cdots-\frac{\beta_M}{\beta_1}d_k,
\end{equation*}
where $k=0,\ldots,N-M$. Let
\begin{equation*}
\By_k:=\begin{bmatrix}d_k&d_{k+1}&\cdots&d_{k+M-2}\end{bmatrix}^\top.
\end{equation*}
Then $\By_{k+1}=\BC_{\rm AB}\By_k$, where
\begin{equation}\label{eq:matrix-C_AB}
\BC_{\rm AB}=\begin{bmatrix}
0&1&0&\cdots&0\\
0&0&1&\cdots&0\\
\vdots&\vdots&\vdots&\ddots&\vdots\\
0&0&0&\cdots&1\\
-\beta_M/\beta_1&-\beta_{M-1}/\beta_1&-\beta_{M-2}/\beta_1&\cdots&-\beta_2/\beta_1
\end{bmatrix}.
\end{equation}
The characteristic polynomial of $\BC_{\rm AB}$ is
\begin{equation*}
\det(zI_{M-1}-\BC_{\rm AB})=z^{M-1}+\frac{\beta_2}{\beta_1}z^{M-2}+\cdots+\frac{\beta_{M-1}}{\beta_1}z+\frac{\beta_M}{\beta_1}=\frac{p_{\rm AB}(z)}{\beta_1}.
\end{equation*}
Since $\beta_1\neq0$, it follows that
\begin{equation}\label{eq:sigma(BC)}
\sigma(\BC_{\rm AB})=\{\lambda\in\mathbb C:\det(\lambda I_{M-1}-\BC_{\rm AB})=0\}=\{\lambda\in\mathbb C:p_{\rm AB}(\lambda)=0\}.
\end{equation}
By \Cref{lem:spectral_power}, for the fixed $\theta\in(\rho_{\rm AB},1)$, we have
\begin{equation}\label{eq:Y_k}
\|\By_k\|_2\leq \hat{C}\theta^k\|\By_0\|_2,
\end{equation}
where $\hat{C}>0$ is a constant independent of $k$ and $N$. 

For $0\leq i\leq M-2$, we have
\begin{equation}\label{eq:d_i1}
|d_i|\leq \sum_{j=0}^{M-2}|d_j|\leq \theta^{-(M-2)}\theta^i\sum_{j=0}^{M-2}|d_j|.
\end{equation}
For $i\geq M-1$, let $k=i-M+2$. Then $d_i$ is the last element of $\By_k$. Using \eqref{eq:Y_k}, we obtain
\begin{equation}\label{eq:d_i2}
|d_i|\leq \|\By_k\|_2\leq \hat{C}\theta^k\sum_{j=0}^{M-2}|d_j|=\hat{C}\theta^{-M+2}\theta^{i}\sum_{j=0}^{M-2}|d_j|.
\end{equation}

Finally, combining \eqref{eq:d_i1} and \eqref{eq:d_i2} yields \eqref{eq:ab_kernel_decay} for $i=0,\ldots,N-1$ where $C= \max\{\theta^{-(M-2)},\hat{C}\theta^{-M+2}\}$.
\end{proof}

For an $M$-step A-B scheme, the localization condition in \Cref{thm:ab_kernel} is a root condition on the discovery polynomial \eqref{eq:p_ab}. The root radii $\rho_{\rm AB}=\max\{|\lambda|:p_{\rm AB}(\lambda)=0\}$ for the A-B schemes are listed in \Cref{tab:ab-root-radii}. The A-B schemes with $M=1,\ldots,6$ satisfy the discovery-stability condition, while $M=7$ does not.
\begin{table}[htbp!]
\centering
\caption{Root radii for A-B schemes.}
\label{tab:ab-root-radii}
\begin{tabular}{cccccccc}
\toprule
$M$ & 1 & 2 & 3 & 4 & 5 & 6 & 7\\
\midrule
$\rho_{\rm AB}$ & 0 & 0.3333 & 0.4663 & 0.6338 & 0.8075 & 0.9829 & 1.1587\\
\bottomrule
\end{tabular}
\end{table}

This theorem identifies the algebraic source of the localization observed in \Cref{fig:A-B--A-M} about the non-auxiliary A-B schemes. When $M \geq 2$, the LMM residual does not uniquely determine all involved grid values. Therefore, the difference between any two zero-residual grid solutions belongs to $\Ker(\BB_h)$. The estimate \eqref{eq:ab_kernel_decay} shows that every such kernel vector is controlled only by its first $M-1$ components and then decays exponentially away from the initial endpoint. This explains why the zero-residual A-B solutions in \Cref{fig:A-B--A-M} may differ near the first few grid points but become nearly indistinguishable in the interior and near the terminal time. Thus, the observed boundary-layer behavior is a consequence of the algebraic kernel structure of the non-auxiliary A-B discovery matrix, rather than a feature of a particular optimization algorithm or a special numerical solution.

We next present a nonhomogeneous version of this estimate, which will be used in \Cref{sec:generalization}. It applies when a grid solution of the discovery system is compared with a prescribed grid sequence evaluated on the sampled trajectory. In the trajectory-error analysis below, this prescribed sequence is the exact vector field sampled along the trajectory, and the forcing term is the local truncation residual.

Let $\widetilde{\Bf}_h=(\widetilde f_0,\ldots,\widetilde f_{N-1})^\top$ be a grid solution of the non-auxiliary A-B system \eqref{eq:ab_equations}, and let $\Bv_h=(v_0,\ldots,v_{N-1})^\top$ be a prescribed grid sequence. Define the residual sequence $\Br_h=(r_M,\ldots,r_N)^\top$ by
\begin{equation}\label{eq:ab_residual_def}
r_n:=q_n-\sum_{m=1}^M\beta_m v_{n-m},\qquad n=M,\ldots,N.
\end{equation}
Set $\Be_h=\widetilde{\Bf}_h-\Bv_h=(e_0,\ldots,e_{N-1})^\top$. Subtracting \eqref{eq:ab_residual_def} from \eqref{eq:ab_equations} gives
\begin{equation}\label{eq:forced_ab}
\sum_{m=1}^M\beta_m e_{n-m}=r_n,\qquad n=M,\ldots,N.
\end{equation}
If $\Bv_h$ is also a solution of the same non-auxiliary A-B system, then $\Br_h=0$ and $\Be_h$ is precisely a kernel vector of the form considered in \Cref{thm:ab_kernel}.

\begin{lemma}\label{lem:forced_ab}
For the A-B scheme with $M=1$, the error sequence $\Be_h$ satisfies
\begin{equation}\label{eq:forced_ab_M1}
|e_i|\leq |\beta_1|^{-1}|r_{i+1}|,\qquad i=0,\ldots,N-1.
\end{equation}
Let $M\geq 2$ and suppose that the A-B discovery-stability condition \eqref{eq:ab_discovery_stability} holds. Then, for every $\theta\in(\rho_{\rm AB},1)$, there exists a constant $C>0$, independent of $N$, such that
\begin{equation}\label{eq:forced_ab_estimate}
|e_i|\leq C\left(\theta^i\sum_{j=0}^{M-2}|e_j|+\sum_{\ell=M}^{\min\{N,i+1\}}\theta^{i+1-\ell}|r_\ell|\right),\qquad i=0,\ldots,N-1,
\end{equation}
where the residual sum is zero if $i+1<M$.
\end{lemma}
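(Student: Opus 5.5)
The plan is to mimic the proof of \Cref{thm:ab_kernel}, replacing the homogeneous companion recursion by a forced one and solving it by discrete variation of constants (Duhamel's formula). For $M=1$ the forced system \eqref{eq:forced_ab} reads $\beta_1 e_{n-1}=r_n$ for $n=1,\ldots,N$. Since $\beta_1\neq0$, we get $e_i=\beta_1^{-1}r_{i+1}$, which gives \eqref{eq:forced_ab_M1} with equality.

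For $M\geq2$, I will divide \eqref{eq:forced_ab} by $\beta_1$ and solve for the newest entry:
\[
e_{k+M-1}=-\sum_{m=2}^M\frac{\beta_m}{\beta_1}e_{k+M-m}+\frac{r_{k+M}}{\beta_1},\qquad k=0,\ldots,N-M.
\]
With $\By_k=(e_k,\ldots,e_{k+M-2})^\top$ and the same companion matrix $\BC_{\rm AB}$ from \eqref{eq:matrix-C_AB}, this becomes
\[
\By_{k+1}=\BC_{\rm AB}\By_k+\beta_1^{-1}r_{k+M}\Be_{M-1},
\]
where $\Be_{M-1}$ is the last canonical basis vector of $\R^{M-1}$. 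Iterating gives
\[
\By_k=\BC_{\rm AB}^k\By_0+\beta_1^{-1}\sum_{j=0}^{k-1}\BC_{\rm AB}^{k-1-j}\Be_{M-1}\,r_{j+M}.
\]
By \eqref{eq:sigma(BC)} and the first part of \Cref{lem:spectral_power} applied with $\rho=\rho_{\rm AB}<\theta$, we have $\|\BC_{\rm AB}^n\|_2\leq K\theta^n$, and hence
\[
\|\By_k\|_2\leq K\theta^k\|\By_0\|_2+K|\beta_1|^{-1}\sum_{j=0}^{k-1}\theta^{k-1-j}|r_{j+M}|.
\]
Substituting $\ell=j+M$, the exponent becomes $k-1-j=k+M-1-\ell$.

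Next comes the index bookkeeping, as in \eqref{eq:d_i1}--\eqref{eq:d_i2}. For $0\leq i\leq M-2$, the residual sum is empty and $|e_i|\leq\theta^{-(M-2)}\theta^i\sum_{j=0}^{M-2}|e_j|$. For $M-1\leq i\leq N-1$, take $k=i-M+2$; then $e_i$ is the last entry of $\By_k$, and $k$ ranges over $1,\ldots,N-M+1$. With this choice, $k+M-1-\ell=i+1-\ell$, and $\ell$ runs from $M$ to $k-1+M=i+1\leq N$, so the upper limit is $\min\{N,i+1\}$. Also $\theta^k=\theta^{-(M-2)}\theta^i$, and $\|\By_0\|_2\leq\sum_{j=0}^{M-2}|e_j|$.

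Taking $C=\max\{\theta^{-(M-2)},K\theta^{-(M-2)},K|\beta_1|^{-1}\}$ then yields \eqref{eq:forced_ab_estimate}. There is no real obstacle here. The only point needing care is that the Duhamel exponent matches exactly $i+1-\ell$, with no extra factor of $\theta^{-1}$ to absorb. The index check above confirms this. The constant stays independent of $N$ because $K$ depends only on $\BC_{\rm AB}$ and $\theta$.
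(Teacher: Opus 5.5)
Your proposal is correct and follows essentially the same route as the paper. The paper also writes the forced recursion as $\By_{k+1}=\BC_{\rm AB}\By_k+\beta_1^{-1}r_{k+M}\mathbf e_{M-1}$, iterates it, and bounds the powers with \Cref{lem:spectral_power}. It then uses the same index shift $k=i-M+2$, and your check of the exponent $i+1-\ell$ and the range $\ell\le i+1\le N$ agrees with \eqref{eq:case2}.
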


\begin{proof}
For $M=1$, according to \eqref{eq:forced_ab} and $\beta_1\neq0$, we obtain $\beta_1 e_{n-1}=r_n$ for $n=1,\ldots,N$. Therefore, \eqref{eq:forced_ab_M1} holds.

For $M\geq2$, we use the same argument as in the proof of \Cref{thm:ab_kernel}. From \eqref{eq:forced_ab}, we have
\begin{equation*}
e_{k+M-1}=-\frac{\beta_2}{\beta_1}e_{k+M-2}-\frac{\beta_3}{\beta_1}e_{k+M-3}-\cdots-\frac{\beta_M}{\beta_1}e_k+\frac{r_{k+M}}{\beta_1},
\end{equation*}
where $k=0,\ldots,N-M$. Define $\By_k=[e_k~e_{k+1}~\cdots~e_{k+M-2}]^\top$. Then
\begin{equation}\label{eq:forced_state_ab}
\By_{k+1}=\BC_{\rm AB}\By_k+\frac{r_{k+M}}{\beta_1}\mathbf e_{M-1},
\end{equation}
where $\BC_{\rm AB}$ is defined by \eqref{eq:matrix-C_AB} and $\mathbf e_{M-1}=(0,\ldots,0,1)^\top\in\R^{M-1}$ denotes the last coordinate vector. Iterating \eqref{eq:forced_state_ab} gives
\begin{equation*}
\By_k=\BC_{\rm AB}^k\By_0+\sum_{s=0}^{k-1}\BC_{\rm AB}^{k-1-s}\frac{r_{s+M}}{\beta_1}\mathbf e_{M-1},
\end{equation*}
for $k=1,\ldots,N-M+1$. According to \eqref{eq:sigma(BC)}, the spectrum of $\BC_{\rm AB}$ is given by the roots of $p_{\rm AB}$. By \Cref{lem:spectral_power}, for every $\theta\in(\rho_{\rm AB},1)$, there exists a constant $\hat C>0$, independent of $N$, such that
\begin{equation}\label{eq:forced_Yk_bound}
\|\By_k\|_2\leq \hat C\Big(\theta^k\|\By_0\|_2+\sum_{s=0}^{k-1}\theta^{k-1-s}|r_{s+M}|\Big),\qquad k=0,\ldots,N-M+1.
\end{equation}

For $0\leq i\leq M-2$, the residual sum in \eqref{eq:forced_ab_estimate} is empty, and
\begin{equation}\label{eq:case1}
|e_i|\leq \sum_{j=0}^{M-2}|e_j|\leq \theta^{-(M-2)}\theta^i\sum_{j=0}^{M-2}|e_j|.
\end{equation}
For $i\geq M-1$, let $k=i-M+2$. Then $e_i$ is the last component of $\By_k$. Using \eqref{eq:forced_Yk_bound} and $\|\By_0\|_2\leq\sum_{j=0}^{M-2}|e_j|$, we obtain
\begin{equation}\label{eq:case2}
\begin{aligned}
|e_i|\leq \|\By_k\|_2&\leq \hat C\Big(\theta^{-M+2}\theta^i\sum_{j=0}^{M-2}|e_j|+\sum_{\ell=M}^{i+1}\theta^{i+1-\ell}|r_\ell|\Big).
\end{aligned}
\end{equation}
Since $i\leq N-1$, the last residual sum agrees with the sum in \eqref{eq:forced_ab_estimate}. Combining \eqref{eq:case1} and \eqref{eq:case2} yields \eqref{eq:forced_ab_estimate}.
\end{proof}


\subsection{Adams--Moulton schemes}

We next consider the non-auxiliary discovery system generated by an $M$-step A-M scheme. In contrast to the A-B scheme, here $\beta_0\neq0$, and the unknown grid vector is $\Bf_h=(f_0,f_1,\ldots,f_N)^\top$. In this case, \eqref{eq:Bh_system} can be rewritten as
\begin{equation}\label{eq:am_equations}
\sum_{m=0}^M \beta_m f_{n-m}=q_n,\qquad n=M,\ldots,N.
\end{equation}
The corresponding discovery polynomial is
\begin{equation*}
p_{\rm AM}(z)=\beta_0z^M+\beta_1z^{M-1}+\cdots+\beta_{M-1}z+\beta_M.
\end{equation*}
We say that the A-M discovery system satisfies the discovery-stability condition if
\begin{equation}\label{eq:am_discovery-stability}
p_{\rm AM}(\lambda)\neq0
\qquad \text{for every }\lambda\in\mathbb C\text{ with }|\lambda|=1.
\end{equation}
Under this condition, let $\max\emptyset=0$ and
\begin{equation*}
\rho_{\rm AM}:=\max\left\{\max_{p_{\rm AM}(\lambda)=0,\ |\lambda|<1}|\lambda|,\max_{p_{\rm AM}(\lambda)=0,\ |\lambda|>1}|\lambda|^{-1}\right\}.
\end{equation*}

Let $\Bf_h^{(1)}$ and $\Bf_h^{(2)}$ be any two grid solutions of the non-auxiliary A-M system \eqref{eq:am_equations}, and set $\Bd=\Bf_h^{(1)}-\Bf_h^{(2)}=(d_0,\ldots,d_N)^\top$. Subtracting the two systems gives
\begin{equation}\label{eq:am_kernel_equations}
\sum_{m=0}^M \beta_m d_{n-m}=0,\qquad n=M,\ldots,N.
\end{equation}
Equivalently, \eqref{eq:am_kernel_equations} can be written as $\BB_h\Bd=0$. Thus $\Bd\in\Ker(\BB_h)$. As in the A-B scheme, the kernel of $\BB_h$ describes the possible differences between any two grid solutions. The main difference from the A-B scheme is that the kernel behavior of the A-M system is controlled by both endpoint indices.

\begin{theorem}\label{thm:am_kernel}
Suppose that the A-M discovery-stability condition \eqref{eq:am_discovery-stability} holds. Then, for every $\theta\in(\rho_{\rm AM},1)$, there exists a constant $C>0$, independent of $N$, such that $\Bd=(d_0,\ldots,d_N)^\top\in\Ker(\BB_h)$ satisfies
\begin{equation}\label{eq:am_kernel_decay}
|d_i|\leq C\left(\theta^i\sum_{j=0}^{M-1}|d_j|+\theta^{N-i}\sum_{j=N-M+1}^{N}|d_j|\right),\qquad i=0,\ldots,N.
\end{equation}
\end{theorem}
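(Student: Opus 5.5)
The plan is to follow the proof of \Cref{thm:ab_kernel}: rewrite the kernel equations \eqref{eq:am_kernel_equations} as a first-order recursion for a companion matrix. The difference is that a purely forward iteration no longer suffices. We split the state into its stable and unstable parts, propagate the stable part forward from the left endpoint, and propagate the unstable part backward from the right endpoint, using the second estimate of \Cref{lem:spectral_power}.

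\textbf{Step 1: companion recursion.} Since $\beta_0\neq0$, \eqref{eq:am_kernel_equations} gives
\[
d_{k+M}=-\sum_{m=1}^{M}\frac{\beta_m}{\beta_0}\,d_{k+M-m},\qquad k=0,\ldots,N-M.
\]
Set $\By_k=[d_k~d_{k+1}~\cdots~d_{k+M-1}]^\top\in\R^M$ for $k=0,\ldots,K:=N-M+1$. Then $\By_{k+1}=\BC_{\rm AM}\By_k$, where $\BC_{\rm AM}$ is the $M\times M$ companion matrix with last row $(-\beta_M/\beta_0,\ldots,-\beta_1/\beta_0)$. As in \eqref{eq:sigma(BC)}, its characteristic polynomial is $p_{\rm AM}(z)/\beta_0$, so $\sigma(\BC_{\rm AM})$ is the root set of $p_{\rm AM}$. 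Condition \eqref{eq:am_discovery-stability} says no eigenvalue lies on the unit circle, so the splitting $\R^M=E_s\oplus E_u$ and the projections $P_s,P_u$ are available. The quantity $\hat\rho$ of \Cref{lem:spectral_power} equals $\rho_{\rm AM}$. If $\beta_M=0$, then $0$ is an eigenvalue, but it falls into $\sigma_s$, so nothing requires $\BC_{\rm AM}$ to be invertible.

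\textbf{Step 2: two-sided representation.} Because $E_s$ and $E_u$ are invariant complementary subspaces, $P_s$ and $P_u$ commute with $\BC_{\rm AM}$. Hence
\[
P_s\By_k=\BC_{\rm AM}^kP_s\By_0 .
\]
Also $P_u\By_K=(\BC_{\rm AM}|_{E_u})^{K-k}P_u\By_k$, and inverting on $E_u$ gives
\[
P_u\By_k=(\BC_{\rm AM}|_{E_u})^{-(K-k)}P_u\By_K .
\]
When $E_u\neq\{0\}$, \Cref{lem:spectral_power} then yields, for $\theta\in(\rho_{\rm AM},1)$,
\[
\|\By_k\|_2\leq\tilde K\big(\theta^k\|\By_0\|_2+\theta^{K-k}\|\By_K\|_2\big),\qquad 0\leq k\leq K,
\]
with $\tilde K$ independent of $N$. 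Note that $\By_0=(d_0,\ldots,d_{M-1})^\top$ and $\By_K=(d_{N-M+1},\ldots,d_N)^\top$, so the two norms are bounded by the endpoint sums in \eqref{eq:am_kernel_decay}. If $E_u=\{0\}$, the first part of \Cref{lem:spectral_power} alone gives the bound without the second term, which is stronger.

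\textbf{Step 3: extracting $d_i$.} For each $i$, pick $k=\min\{i,K\}$, so that $d_i$ is a component of $\By_k$.
\begin{itemize}
\item If $i\leq K$, then $k=i$ and $K-k=N-i-(M-1)$. The bound becomes $\tilde K\theta^{-(M-1)}(\theta^iE_0+\theta^{N-i}E_T)$.
\item If $i>K$, then $k=K$ and $N-i\leq M-1$. Here $\theta^K\leq\theta^{i}\theta^{-(M-1)}$ and $1\leq\theta^{N-i}\theta^{-(M-1)}$, which gives the same form.
\end{itemize}
Taking $C=\tilde K\theta^{-(M-1)}$ yields \eqref{eq:am_kernel_decay}.

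The main obstacle is Step 2: justifying the backward propagation of the unstable component from the right endpoint. This relies on the commutation $P_u\BC_{\rm AM}=\BC_{\rm AM}P_u$ and the invertibility of $\BC_{\rm AM}|_{E_u}$, even when $\BC_{\rm AM}$ itself is singular. It also requires handling the degenerate cases $E_u=\{0\}$ and $E_s=\{0\}$ separately.
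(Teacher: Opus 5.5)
Your proof is correct and follows the paper's argument essentially step for step. You use the companion recursion for $\By_k$, propagate the stable part forward from $\By_0$ and the unstable part backward from $\By_{N-M+1}$ via $(\BC_{\rm AM}|_{E_u})^{-(N-M+1-k)}$, and close with the second estimate of \Cref{lem:spectral_power}. The only difference is cosmetic index bookkeeping when extracting $d_i$. The paper takes $k=i-M+1$, so $d_i$ is the last entry of $\By_k$, and treats $i\le M-2$ directly. You take $k=\min\{i,N-M+1\}$, so $d_i$ is the first entry, and treat the last $M-1$ indices through $\By_{N-M+1}$. Both give the constant $\tilde K\theta^{-(M-1)}$.
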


\begin{proof}
Note that $\beta_0\neq 0$ in the A-M schemes. For $\Bd\in\Ker(\BB_h)$, \eqref{eq:am_kernel_equations} gives
\begin{equation*}
d_{k+M}=-\frac{\beta_1}{\beta_0}d_{k+M-1}-\frac{\beta_2}{\beta_0}d_{k+M-2}-\cdots-\frac{\beta_M}{\beta_0}d_{k},
\end{equation*}
where $k=0,\ldots,N-M$. Let $\By_k:=[d_k~d_{k+1}~\cdots~d_{k+M-1}]^{\top}$. Then $\By_{k+1}=\BC_{\rm AM}\By_k$, where
\begin{equation}\label{eq:matrix-C_AM}
\BC_{\rm AM}=
\begin{bmatrix}
0&1&0&\cdots&0\\
0&0&1&\cdots&0\\
\vdots&\vdots&\vdots&\ddots&\vdots\\
0&0&0&\cdots&1\\
-\beta_M/\beta_0&-\beta_{M-1}/\beta_0&-\beta_{M-2}/\beta_0&\cdots&-\beta_1/\beta_0
\end{bmatrix}.
\end{equation}
The characteristic polynomial of $\BC_{\rm AM}$ is
\begin{equation*}
\det(zI_M-\BC_{\rm AM})=z^M+\frac{\beta_1}{\beta_0}z^{M-1}+\cdots+\frac{\beta_{M-1}}{\beta_0}z+\frac{\beta_M}{\beta_0}=\frac{p_{\rm AM}(z)}{\beta_0}.
\end{equation*}
Therefore,
\begin{equation}\label{eq:sigma_C_AM}
\sigma(\BC_{\rm AM})=\{\lambda\in\mathbb C:p_{\rm AM}(\lambda)=0\}.
\end{equation}
By \eqref{eq:am_discovery-stability} and \eqref{eq:sigma_C_AM}, we have $\sigma(\BC_{\rm AM})\cap\{\lambda\in\mathbb C:|\lambda|=1\}=\emptyset$. Recall that $\mathbb R^M=E_s\oplus E_u$ is the stable-unstable decomposition associated with the roots inside and outside the unit circle, and $P_s$ and $P_u$ are the corresponding spectral projections. Since $\By_{N-M+1}=\BC_{\rm AM}^{N-M+1} \By_0$, we have
\begin{equation*}
P_u\By_{N-M+1}=P_u\BC_{\rm AM}^{N-M+1}\By_0=(\BC_{\rm AM}|_{E_u})^{N-M+1} P_u\By_0.
\end{equation*}
Hence, for $k=0,\ldots,N-M+1$, we have
\begin{equation*}
\BC_{\rm AM}^kP_u\By_0=(\BC_{\rm AM}|_{E_u})^{-(N-M+1-k)}P_u\By_{N-M+1}.
\end{equation*}
Therefore, using $\By_k=\BC_{\rm AM}^k\By_0$ and $I=P_s+P_u$, we obtain the representation
\begin{equation*}
\By_k=\BC_{\rm AM}^{k}P_s\By_0+(\BC_{\rm AM}|_{E_u})^{-(N-M+1-k)}P_u\By_{N-M+1}.
\end{equation*}
If $E_u=\{0\}$, the second term is omitted.

According to \eqref{eq:sigma_C_AM} and using \Cref{lem:spectral_power}, we obtain
\begin{equation}\label{eq:am_Yk_bound}
\|\By_k\|_2\leq\hat C\left(\theta^k\|\By_0\|_2+\theta^{N-M+1-k}\|\By_{N-M+1}\|_2\right),
\end{equation}
where $\theta\in(\rho_{\rm AM},1)$ and $\hat C>0$ is independent of $N$.

For $0\leq i\leq M-2$, the estimate follows
\begin{equation}\label{eq:thm:am_kernel-01}
|d_i|\leq \sum_{j=0}^{M-1}|d_j|\leq \theta^{-(M-1)}\theta^i \sum_{j=0}^{M-1}|d_j|.
\end{equation}
For $i\geq M-1$, set $k=i-M+1$. Then $0\leq k\leq N-M+1$ and $d_i$ is the last component of $\By_k$. By \eqref{eq:am_Yk_bound}, we have
\begin{equation}\label{eq:thm:am_kernel-02}
|d_i|\leq\|\By_k\|_2\leq\hat C\Bigg(\theta^{i-M+1}\sum_{j=0}^{M-1}|d_j|+\theta^{N-i}\sum_{j=N-M+1}^{N}|d_j|\Bigg).
\end{equation}
Combining \eqref{eq:thm:am_kernel-01} and \eqref{eq:thm:am_kernel-02} gives \eqref{eq:am_kernel_decay}.
\end{proof}

The values of $\rho_{\rm AM}$ for several classical A-M schemes are listed in \Cref{tab:am-dichotomy}. For the one-step A-M scheme, the discovery polynomial has the root $-1$, which violates the hypothesis of \Cref{thm:am_kernel}.
\begin{table}[htbp!]
\centering
\caption{Root radii for A-M schemes.}
\label{tab:am-dichotomy}
\begin{tabular}{cccccc}
\toprule
$M$ & 1 & 2 & 3 & 4 & 5\\
\midrule
$\rho_{\rm AM}$ & 1 & 0.5826 & 0.4227 & 0.3359 & 0.4530\\
\bottomrule
\end{tabular}
\end{table}

This theorem provides a corresponding explanation for the results presented in \Cref{fig:A-B--A-M} for non-auxiliary A-M schemes. The estimate \eqref{eq:am_kernel_decay} shows that each kernel vector is controlled by $M$ left endpoint values and $M$ right endpoint values. The influence of the initial endpoint decays exponentially from the left, while the influence of the terminal endpoint decays exponentially from the right. This explains the two-sided boundary layer behavior observed in \Cref{fig:A-B--A-M}. Therefore, the localization phenomenon is determined by the algebraic kernel structure of the non-auxiliary A-M discovery matrix, rather than by a specific optimization method or a special numerical solution.

We next state the corresponding nonhomogeneous estimate, in the same spirit as \Cref{lem:forced_ab}. It applies when a grid solution of the discovery system is compared with a prescribed grid sequence that satisfies the LMM equations only up to a residual. In \Cref{sec:generalization}, the prescribed sequence will be the exact vector field sampled along the trajectory, and the residual will be the local truncation error.

Let $\widetilde{\Bf}_h=(\widetilde f_0,\ldots,\widetilde f_N)^\top$ be a grid solution of the non-auxiliary A-M system \eqref{eq:am_equations}, and let $\Bv_h=(v_0,\ldots,v_N)^\top$ be a prescribed grid sequence. Define the residual sequence $\Br_h=(r_M,\ldots,r_N)^\top$ by
\begin{equation}\label{eq:am_residual_def}
r_n:=q_n-\sum_{m=0}^M\beta_m v_{n-m},\qquad n=M,\ldots,N.
\end{equation}
Set $\Be_h=\widetilde{\Bf}_h-\Bv_h=(e_0,\ldots,e_N)^\top$. Subtracting \eqref{eq:am_residual_def} from \eqref{eq:am_equations} gives
\begin{equation}\label{eq:forced_am}
\sum_{m=0}^M\beta_m e_{n-m}=r_n,\qquad n=M,\ldots,N.
\end{equation}
If $\Bv_h$ is also a grid solution of the same non-auxiliary A-M system, then $\Br_h=0$, and $\Be_h$ reduces to a kernel vector considered in \Cref{thm:am_kernel}.

\begin{lemma}\label{lem:forced_am}
Suppose that the A-M discovery-stability condition \eqref{eq:am_discovery-stability} holds. Let $\Be_h=(e_0,\ldots,e_N)^\top$ satisfy \eqref{eq:forced_am}. Then, for every $\theta\in(\rho_{\rm AM},1)$, there exists a constant $C>0$, independent of $N$, such that
\begin{equation}\label{eq:forced_am_estimate}
|e_i|\leq C\left(\theta^i\sum_{j=0}^{M-1}|e_j|+\theta^{N-i}\sum_{j=N-M+1}^{N}|e_j|+\sum_{\ell=M}^{N}\theta^{|i-\ell|}|r_\ell|\right),\qquad i=0,\ldots,N.
\end{equation}
\end{lemma}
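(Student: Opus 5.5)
We mirror the proof of \Cref{thm:am_kernel}, now carrying the forcing term. From \eqref{eq:forced_am} and $\beta_0\neq0$ we get
\begin{equation*}
e_{k+M}=-\sum_{m=1}^M\frac{\beta_m}{\beta_0}e_{k+M-m}+\frac{r_{k+M}}{\beta_0},\qquad k=0,\ldots,N-M.
\end{equation*}
Set $\By_k=[e_k~\cdots~e_{k+M-1}]^\top$. Then
\begin{equation*}
\By_{k+1}=\BC_{\rm AM}\By_k+\beta_0^{-1}r_{k+M}\mathbf e_M,
\end{equation*}
where $\BC_{\rm AM}$ is given by \eqref{eq:matrix-C_AM} and $\mathbf e_M$ is the last coordinate vector of $\R^M$. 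By \eqref{eq:sigma_C_AM} and \eqref{eq:am_discovery-stability}, $\BC_{\rm AM}$ has no spectrum on the unit circle, so we can use the splitting $\R^M=E_s\oplus E_u$ and the projections $P_s,P_u$. Since $E_s$ and $E_u$ are invariant, $P_s$ and $P_u$ commute with $\BC_{\rm AM}$, and the recursion decouples into a stable part and an unstable part.

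\textbf{Stable part.} We solve the stable part forward from $k=0$:
\begin{equation*}
P_s\By_k=\BC_{\rm AM}^kP_s\By_0+\beta_0^{-1}\sum_{j=0}^{k-1}\BC_{\rm AM}^{k-1-j}P_s\mathbf e_M\,r_{j+M}.
\end{equation*}
Writing $\BC_{\rm AM}^nP_s=(\BC_{\rm AM}|_{E_s})^nP_s$, the second estimate of \Cref{lem:spectral_power} bounds $\|\BC_{\rm AM}^nP_s\|_2$ by $\tilde K\theta^n$. If $E_u=\{0\}$, this is just the first estimate of that lemma.

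\textbf{Unstable part.} We solve the unstable part backward from $k=N-M+1$. Since $\BC_{\rm AM}|_{E_u}$ is invertible,
\begin{equation*}
P_u\By_k=(\BC_{\rm AM}|_{E_u})^{-(N-M+1-k)}P_u\By_{N-M+1}-\beta_0^{-1}\sum_{j=k}^{N-M}(\BC_{\rm AM}|_{E_u})^{-(j+1-k)}P_u\mathbf e_M\,r_{j+M}.
\end{equation*}
This identity can be checked by induction downward in $k$, or directly from $P_u\By_{j+1}=\BC_{\rm AM}P_u\By_j+\beta_0^{-1}r_{j+M}P_u\mathbf e_M$. The inverse powers are bounded by $\tilde K\theta^{n}$, again by \Cref{lem:spectral_power}. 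Adding the two parts and bounding with $\|P_s\mathbf e_M\|_2,\|P_u\mathbf e_M\|_2\leq C$ gives
\begin{equation*}
\|\By_k\|_2\leq\hat C\Big(\theta^k\|\By_0\|_2+\theta^{N-M+1-k}\|\By_{N-M+1}\|_2+\sum_{j=0}^{k-1}\theta^{k-1-j}|r_{j+M}|+\sum_{j=k}^{N-M}\theta^{j+1-k}|r_{j+M}|\Big).
\end{equation*}

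\textbf{Reading off $e_i$.} For $i\geq M-1$, take $k=i-M+1$, so that $e_i$ is the last component of $\By_k$, and substitute $\ell=j+M$.
\begin{itemize}
\item In the first residual sum, $\ell\leq i$ and the exponent is $k-1-j=i-\ell$.
\item In the second residual sum, $\ell\geq i+1$ and the exponent is $j+1-k=\ell-i$.
\end{itemize}
Together these give exactly $\sum_{\ell=M}^N\theta^{|i-\ell|}|r_\ell|$. The endpoint terms are $\theta^{i-M+1}\|\By_0\|_2$ and $\theta^{N-i}\|\By_{N-M+1}\|_2$, where $\By_{N-M+1}=(e_{N-M+1},\ldots,e_N)$. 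This yields \eqref{eq:forced_am_estimate} after absorbing the factor $\theta^{-(M-1)}$ into $C$. For $0\leq i\leq M-2$, the bound is trivial as in \eqref{eq:thm:am_kernel-01}.

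\textbf{Main obstacle.} The main difficulty is the backward representation of the unstable component with forcing. One must check the index bookkeeping carefully so that the exponent comes out exactly as $|i-\ell|$, including the boundary terms $\ell=i$ and $\ell=i+1$. One must also confirm that all constants come from \Cref{lem:spectral_power} applied to the fixed matrix $\BC_{\rm AM}$, and hence are independent of $N$.
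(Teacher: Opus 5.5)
Your proposal is correct and follows essentially the same route as the paper's proof. Both use the same companion recursion, solve the stable component forward and the unstable component backward (the paper packages this as the Green's function $G_{k,\ell}$), bound everything via \Cref{lem:spectral_power}, and read off $e_i$ from $\By_{i-M+1}$; your exact exponent bookkeeping gives $\theta^{|i-\ell|}$ directly, whereas the paper bounds $\|G_{k,\ell}\|_2\leq\hat C\theta^{|k-\ell|}$ and absorbs the resulting one-index shift $\theta^{|i+1-\ell|}$ into $C$.
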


\begin{proof}
Define $\By_k=[e_k~e_{k+1}~\cdots~e_{k+M-1}]^\top$ for $k=0,\ldots,N-M+1$. According to \eqref{eq:forced_am}, we obtain 
\begin{equation*}
\By_{k+1}=\BC_{\rm AM}\By_k+\frac{r_{k+M}}{\beta_0}\mathbf e_M,\qquad k=0,\ldots,N-M,
\end{equation*}
where $\BC_{\rm AM}$ is defined by \eqref{eq:matrix-C_AM}, and $\mathbf e_M=(0,\ldots,0,1)^\top\in\mathbb R^M$ denotes the last coordinate vector.

With the same stable-unstable decomposition $\mathbb R^M=E_s\oplus E_u$ as in the proof of \Cref{thm:am_kernel}, we get
\begin{equation*}
\By_k=\BC_{\rm AM}^{k}P_s\By_0+(\BC_{\rm AM}|_{E_u})^{-(N-M+1-k)}P_u\By_{N-M+1}+\sum_{\ell=0}^{N-M}
G_{k,\ell}\frac{r_{\ell+M}}{\beta_0}\mathbf e_M,
\end{equation*}
where
\begin{equation*}
G_{k,\ell}=
\begin{cases}
\BC_{\rm AM}^{k-1-\ell}P_s, & \ell<k,\\
-(\BC_{\rm AM}|_{E_u})^{-(\ell+1-k)}P_u, & \ell\geq k.
\end{cases}
\end{equation*}
The terms involving $E_u$ are omitted if $E_u=\{0\}$. By \Cref{lem:spectral_power}, we have
\begin{equation*}
\|G_{k,\ell}\|_2\leq\hat C\theta^{|k-\ell|},
\end{equation*}
where $0\leq k\leq N-M+1$, $0\leq \ell\leq N-M$ and $\hat C$ independent of $N$. Therefore,
\begin{equation}\label{eq:forced_am_Yk_bound}
\|\By_k\|_2\leq C\Big(\theta^k\|\By_0\|_2+\theta^{N-M+1-k}\|\By_{N-M+1}\|_2+\sum_{\ell=0}^{N-M}\theta^{|k-\ell|}|r_{\ell+M}|\Big).
\end{equation}

For $0\leq i\leq M-2$, the desired estimate follows directly from the left endpoint term. For $M-1\leq i\leq N$, set $k=i-M+1$. Then $0\leq k\leq N-M+1$, and $e_i$ is the last component of $\By_k$. From \eqref{eq:forced_am_Yk_bound}, we obtain
\begin{equation*}
|e_i|\leq C\left(\theta^i\sum_{j=0}^{M-1}|e_j|+\theta^{N-i}\sum_{j=N-M+1}^{N}|e_j|
+\sum_{\ell=0}^{N-M}\theta^{|i-M+1-\ell|}|r_{\ell+M}|\right).
\end{equation*}
Setting $s=\ell+M$ in the forcing sum yields $\theta^{|i+1-s|}$, which is bounded by a constant multiple of $\theta^{|i-s|}$. Absorbing this factor into $C$ proves \eqref{eq:forced_am_estimate}.
\end{proof}

\section{Whole-trajectory generalization}\label{sec:generalization}
In this section, we lift error estimates on the observation grid to whole-trajectory generalization estimates. Clearly, grid error estimates cannot control continuous in time errors along the observed trajectory. Let $f$ be an arbitrary component of $\Bf$, and let $u_h\in\calA$ be a learned scalar approximation of $f$. This componentwise treatment is justified because the coefficients of a LMM act only on the time index and do not mix the output components of the vector field. Thus, the following scalar estimates can be applied to each component separately, while the trajectory $\Bx(t)$ itself remains the full $d$-dimensional observed trajectory.

Once the trajectory $\Bx(t)$ is fixed, the error of a learned approximation $u_h\in\calA$ along the trajectory is the one-dimensional trace
\begin{equation}\label{eq:error}
g_h(t)=u_h(\Bx(t))-f(\Bx(t)),\qquad t\in[0,T].
\end{equation}
Thus, in the sense considered here, whole-trajectory generalization is a deterministic one-dimensional sampling problem along the observed trajectory, rather than a high-dimensional statistical learning problem. We derive such whole-trajectory estimates for both the auxiliary and non-auxiliary formulations.

\subsection{Sampling inequalities along the trajectory}
We use standard one-dimensional sampling inequalities (see \Cref{lem:sampling_l}) to convert grid error estimates into continuous-time estimates along the observation trajectory. Such inequalities are classical in the analysis of Sobolev functions sampled on discrete point sets; see \cite{Madych2006,Rieger2008}. For the A-B and A-M discovery systems considered below, the involved sampling set $\{t_n:n\in\calI_h\}$ is a quasi-uniform subset of $[0,T]$. In the A-B case, the terminal endpoint may be missing, while in the A-M case the full set of endpoints is included. Since the step number $M$ is fixed, these endpoint differences only affect the constants and do not change the order of the estimates.

\begin{lemma}\label{lem:sampling_l}
Let $p\geq1$ be an integer and let $\calI_h=\{s,s+1,\ldots,e(N)\}$ be one of the sets of time indices involved in the A-B or A-M discovery system with fixed step number $M$. For $h$ sufficiently small, the following estimates hold. If $v\in H^p(0,T)$, then
\begin{equation}\label{eq:sampling_l2}
\|v\|_{L^2(0,T)}\leq C\left[\left(h\sum_{n\in\calI_h}|v(t_n)|^2\right)^{\frac 12}+h^p|v|_{H^p(0,T)}\right].
\end{equation}
Moreover, if $v\in W^{p,1}(0,T)$, then
\begin{equation}\label{eq:sampling_l1}
\|v\|_{L^1(0,T)}\leq C\left[h\sum_{n\in\calI_h}|v(t_n)|+h^p|v|_{W^{p,1}(0,T)}\right].
\end{equation}
Here the constant $C$ is independent of $h$ and $v$.
\end{lemma}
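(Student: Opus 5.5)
The plan is a local polynomial interpolation argument on a partition of $[0,T]$ into blocks of $p$ consecutive grid intervals, followed by summation over blocks. Since $\calI_h$ is either $\{0,\dots,N-1\}$ (A-B) or $\{0,\dots,N\}$ (A-M), and $N\geq M$ is large when $h$ is small, we can cover $[0,T]$ by $O(N/p)$ closed intervals $I_k$. Each $I_k$ has length between $p h$ and $(2p-1)h$, contains exactly $p$ (or a bounded number $\geq p$) of the nodes $t_n$ with $n\in\calI_h$, and any two of them overlap in at most boundary points. In the A-B case the last interval $[t_{N-p},T]$ still contains the $p$ nodes $t_{N-p},\dots,t_{N-1}$; this only changes constants, since $M$ and $p$ are fixed.

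On each block $I_k$, let $\Pi_k v$ be the polynomial of degree $p-1$ interpolating $v$ at $p$ chosen nodes $t_{n_1}<\dots<t_{n_p}$ in $I_k$. This is well defined because $v\in H^p\subset C$, and $W^{p,1}\subset C$ in 1D. Two standard bounds are needed.

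\textbf{Stability.} By scaling to a reference interval, with nodes at fixed relative positions from a finite set of configurations, we get
\[
\|\Pi_k v\|_{L^q(I_k)}\leq C h^{1/q}\max_j|v(t_{n_j})|\leq C\Big(h\sum_j|v(t_{n_j})|^q\Big)^{1/q},\qquad q=1,2.
\]
The Lagrange basis on the reference interval is bounded, and only finitely many node patterns occur, so $C$ is uniform in $h$.

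\textbf{Approximation.} The error $w=v-\Pi_k v$ vanishes at the $p$ nodes. By repeated Rolle's theorem, $w^{(j)}$ has a zero in $I_k$ for $j=0,\dots,p-1$; this is valid for $W^{p,1}$ functions since $w^{(p-1)}$ is absolutely continuous. Integrating from these zeros successively gives
\[
\|w\|_{L^\infty(I_k)}\leq |I_k|^{p-1}\|v^{(p)}\|_{L^1(I_k)}.
\]
Hence $\|w\|_{L^1(I_k)}\leq Ch^p|v|_{W^{p,1}(I_k)}$. Using Cauchy--Schwarz, $\|v^{(p)}\|_{L^1(I_k)}\leq |I_k|^{1/2}\|v^{(p)}\|_{L^2(I_k)}$, so
\[
\|w\|_{L^2(I_k)}\leq |I_k|^{1/2}\|w\|_{L^\infty(I_k)}\leq Ch^p|v|_{H^p(I_k)}.
\]

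Next, write $v=\Pi_k v+w$ on $I_k$ and sum over $k$. For $L^1$, we have $\sum_k|v|_{W^{p,1}(I_k)}\leq C|v|_{W^{p,1}(0,T)}$ because the blocks have bounded overlap, and each node appears in a bounded number of blocks. This gives \eqref{eq:sampling_l1}. For $L^2$, we square, sum, and use $\sum_k|v|_{H^p(I_k)}^2\leq C|v|_{H^p(0,T)}^2$, then take square roots. This gives \eqref{eq:sampling_l2}.

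The main obstacle is the bookkeeping at the endpoints. First, the partition must respect the possibly missing terminal node in the A-B case. Second, the interpolation constants must be uniform, which requires $h$ sufficiently small so that $N\geq 2p$ and every block contains $p$ admissible nodes. I would fix the partition explicitly: blocks $[t_{jp},t_{(j+1)p}]$ for the bulk, and the last block merged with its predecessor. Then the reference configurations form a finite list, and the scaling argument yields an $h$-independent $C$.
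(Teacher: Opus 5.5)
Your proof is correct, but it takes a different route from the paper, which gives no argument for \Cref{lem:sampling_l}. The paper only cites the general sampling inequalities of \cite{Madych2006,Rieger2008}, remarking that $\{t_n:n\in\calI_h\}$ is quasi-uniform and that the missing terminal node in the A-B case affects only constants. The cited results are stated for scattered data in arbitrary dimension. They are typically proved by local polynomial reproduction (norming sets on cone-condition domains) combined with Bramble--Hilbert-type local approximation. Your argument is the one-dimensional specialization of this, made fully explicit:
\begin{itemize}
\item Piecewise Lagrange interpolation on blocks of $p$ consecutive equispaced nodes is stable with an $h$-independent Lebesgue constant, because only finitely many reference configurations occur.
\item The Rolle bound $\|v-\Pi_kv\|_{L^\infty(I_k)}\le|I_k|^{p-1}\|v^{(p)}\|_{L^1(I_k)}$ handles the $W^{p,1}$ and $H^p$ cases at once.
\item The weighted discrete $\ell^1$ and $\ell^2$ sums come out exactly in the form the lemma states.
\end{itemize}

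The citation buys brevity and generality. Your route buys three things:
\begin{itemize}
\item a self-contained proof with constants depending only on $p$;
\item an explicit check that the A-B case never uses the unavailable node $t_N$, since the merged last block still contains at least $p$ nodes $t_n$ with $n\le N-1$;
\item an argument that transfers directly to the subintervals $[\delta,T]$ and $[\delta,T-\delta]$ used in \Cref{thm:noaux_ab_generalization_1} and \Cref{thm:noaux_am_generalization}, where the paper again only asserts that the method carries over.
\end{itemize}
On those subintervals $\delta$ need not be a grid point. The finite list of configurations then becomes a compact family, with nodes offset by $\eta h$, $\eta\in[0,1)$, from the block endpoint, but the Lebesgue constants stay uniformly bounded.

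Two cosmetic points, neither of which affects the argument: the bulk blocks $[t_{jp},t_{(j+1)p}]$ contain $p+1$ nodes rather than $p$, and $N\ge p$ already suffices for your partition.
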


The same estimation method can also be applied to fixed subintervals of $[0,T]$ where the restricted sample set remains quasi-uniform, and the constant remains independent of $h$.

\subsection{Trajectory generalization for the auxiliary formulation}\label{sec:aux_generalization}
We first show how the grid error estimate for the auxiliary formulation can be converted into a continuous-time error estimate along the observed trajectory. 

For $p\geq1$ and $\tilde{R}>0$, define
\begin{equation}\label{eq:trajectory_regular_class}
\calA_{\tilde{R}}=\left\{u_h\in\calA:\ g_h\in H^p(0,T),\ |g_h|_{H^p(0,T)}\leq \tilde{R}\right\},
\end{equation}
where $g_h$ is defined by \eqref{eq:error}. This set collects functions whose trace errors along the observed trajectory are uniformly bounded in $H^p(0,T)$.

\begin{theorem}\label{thm:aux_generalization}
In the dynamical system \eqref{eq:intro_ode}, suppose $\Bx\in C^{\infty}([0,T])^d$ and $\Bf$ is defined in a small neighborhood of $\calT$. Assume that $\hf_{\calA,h}$ is a global minimizer of the auxiliary loss $J_{a,h}$ and that $\hf_{\calA,h}\in\calA_{\tilde R}$ for some constant $\tilde R>0$ independent of $h$. Then, we have
\begin{equation}\label{eq:aux_l1_generalization}
\int_0^T |\hf_{\calA,h}(\Bx(t))-f(\Bx(t))|~\odt\leq C\Big(\kappa_2(\BA_h)(h^p+e_{\calA})+\tilde{R}h^p\Big),
\end{equation}
where $C$ is independent of $h$ and $\calA$, $\kappa_2(\BA_h)$ is the $2$-condition number of $\BA_h$, and $e_{\calA}$ satisfies $e_{\calA}>\inf_{u\in\calA}\sup_{\Bz\in\Omega_{\calT}}|u(\Bz)-f(\Bz)|$.
\end{theorem}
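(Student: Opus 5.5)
The plan is to combine the one-dimensional sampling inequality of \Cref{lem:sampling_l} with the grid estimate of \cite{Du2022} recalled in \Cref{sec:prelim}. Set $g_h(t)=\hf_{\calA,h}(\Bx(t))-f(\Bx(t))$ as in \eqref{eq:error}. By the assumption $\hf_{\calA,h}\in\calA_{\tilde R}$, we have $g_h\in H^p(0,T)$ with $|g_h|_{H^p(0,T)}\leq\tilde R$. The grid values satisfy $g_h(t_n)=\hf_{\calA,h}(\Bx_n)-f(\Bx_n)$ for $n\in\calI_h$. Hence the discrete quantity appearing in \eqref{eq:sampling_l2} is exactly the empirical seminorm, up to a normalization:
\begin{equation*}
\Big(h\sum_{n\in\calI_h}|g_h(t_n)|^2\Big)^{1/2}=\big(h\,t(N)\big)^{1/2}\,|\hf_{\calA,h}-f|_{2,h}.
\end{equation*}
Since $t(N)\in\{N,N+1\}$ and $h=T/N$, the factor $h\,t(N)\leq 2T$ is bounded independently of $h$.

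The key steps, in order, are as follows.

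\textbf{Step 1 (pass to $L^2$).} By Cauchy--Schwarz on $[0,T]$,
\begin{equation*}
\int_0^T|g_h|\,\odt\leq T^{1/2}\|g_h\|_{L^2(0,T)}.
\end{equation*}

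\textbf{Step 2 (sampling inequality).} Apply \eqref{eq:sampling_l2} with $v=g_h$, valid for $h$ sufficiently small. This gives
\begin{equation*}
\|g_h\|_{L^2(0,T)}\leq C\Big[(2T)^{1/2}|\hf_{\calA,h}-f|_{2,h}+h^p\tilde R\Big].
\end{equation*}

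\textbf{Step 3 (grid estimate).} Insert the grid estimate $|\hf_{\calA,h}-f|_{2,h}\leq C\kappa_2(\BA_h)(h^p+e_{\calA})$. This is valid under the smoothness of $\Bx\in C^\infty$, which makes the local truncation residuals of the LMM and of the finite-difference auxiliary conditions \eqref{eq:fd_aux_detailed} equal to $O(h^p)$. It also uses the definition of $e_{\calA}$ and the fact that $\hf_{\calA,h}$ is a global minimizer of $J_{a,h}$. Combining Steps 1–3 yields \eqref{eq:aux_l1_generalization}.

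Alternatively, one could use \eqref{eq:sampling_l1} directly and bound $h\sum_n|g_h(t_n)|$ by the discrete Cauchy--Schwarz inequality. This route needs $|g_h|_{W^{p,1}}\leq T^{1/2}|g_h|_{H^p}$, which also holds, so either route works.

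The main obstacle is not the chain of inequalities itself but the bookkeeping of constants. The constant must be independent of $h$ and of $\calA$. The constant in the grid estimate from \cite{Du2022} depends only on the trajectory and the LMM coefficients, and the normalization mismatch between $|\cdot|_{2,h}$ (an average over $t(N)$ points) and the $h$-weighted sum must be controlled uniformly. I would also remark on the case of large $h$. There \Cref{lem:sampling_l} is stated only for $h$ small, but for the finitely many remaining $N$ the estimate follows trivially by enlarging $C$. That step would need a crude bound on $\|g_h\|_{L^1}$, for instance from $H^p$ control together with one grid value. If this is awkward, I would simply state the result for $h\leq h_0$.
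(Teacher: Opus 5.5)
Your proposal is correct and follows essentially the same route as the paper: the grid estimate of \cite{Du2022}, the $L^2$ sampling inequality \eqref{eq:sampling_l2} applied to the trace error, the bounded factor $(h\,t(N))^{1/2}$ converting the $h$-weighted sum into $|\cdot|_{2,h}$, and Cauchy--Schwarz to pass from $L^2$ to $L^1$. Your remark that \Cref{lem:sampling_l} holds only for $h$ sufficiently small, so the estimate is really for $h\leq h_0$, addresses a point the paper's proof leaves implicit.
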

\begin{proof}
By Theorem 5.1 in \cite{Du2022}, the auxiliary grid estimate gives
\begin{equation}\label{eq:thm4.2-01}
|\hf_{\calA,h}-f|_{2,h}\leq \tilde{C}\kappa_2(\BA_h)(h^p+e_{\calA}),
\end{equation}
where $\tilde{C}$ is a constant independent of $h$ and $\calA$. Then, for $\hf_{\calA,h}\in\calA_{\tilde{R}}$, let $\hat{g}_h(t)=\hf_{\calA,h}(\Bx(t))-f(\Bx(t))$ for $0\leq t\leq T$. Using \Cref{lem:sampling_l}, we obtain
\begin{equation}\label{eq:thm4.2-02}
\|\hat{g}_h\|_{L^2(0,T)}\leq\tilde{C}\Big[\Big(h\sum_{n\in\calI_h}|\hat{g}_h(t_n)|^2\Big)^{\frac 12}+\tilde{R}h^p\Big],
\end{equation}
where the constant $\tilde{C}$ is independent of $h$. Since $\hat{g}_h(t_n)=\hf_{\calA,h}(\Bx_n)-f(\Bx_n)$ and $\calI_h\subset\{0,\ldots,N\}$, there exists a constant $C_T>0$ independent of $h$, such that $h|\calI_h|\leq C_T$. Therefore,
\begin{equation}\label{eq:thm4.2-03}
\begin{aligned}
\Big(h\sum_{n\in\calI_h}|\hat{g}_h(t_n)|^2\Big)^{\frac 12}&=(h|\calI_h|)^{\frac 12}\Big(\frac{1}{|\calI_h|}\sum_{n\in\calI_h}|\hat{g}_h(t_n)|^2\Big)^{\frac 12} &\leq C_T|\hf_{\calA,h}-f|_{2,h}.
\end{aligned}
\end{equation}

Combining \eqref{eq:thm4.2-01}-\eqref{eq:thm4.2-03}, we obtain
\begin{equation}\label{eq:thm4.2-04}
\|\hat{g}_h(t)\|_{L^2(0,T)}\leq \hat{C}\Big(\kappa_2(\BA_h)(h^p+e_{\calA})+h^p\tilde R\Big).
\end{equation}
Finally, by the Cauchy--Schwarz inequality,
\begin{equation}\label{eq:thm4.2-05}
\int_0^T |\hat{g}_h(t)|~\odt\leq T^{\frac 12}\|\hat{g}_h(t)\|_{L^2(0,T)}.
\end{equation}
Therefore, combining \eqref{eq:thm4.2-04}-\eqref{eq:thm4.2-05} yields \eqref{eq:aux_l1_generalization}.
\end{proof}

\subsection{Trajectory generalization for the non-auxiliary formulation}\label{sec:noaux_generalization}
We next consider the non-auxiliary formulation. For A-B and A-M schemes, the LMM discovery equations do not uniquely determine all involved values of the learned vector field on the grid. The estimates in Section \ref{sec:kernel} show that this nonuniqueness is localized near the endpoints. We combine the boundary-layer estimates with the sampling inequality to obtain whole-trajectory bounds.

For $p\geq1$ and $\hat R>0$, define
\begin{equation}\label{eq:trajectory_w_class}
\calA_{\hat R}=\left\{u\in\calA:\ g_h\in W^{p,1}(0,T),\ |g_h|_{W^{p,1}(0,T)}\leq \hat R\right\},
\end{equation}
where $g_h$ is defined by \eqref{eq:error}. This set collects functions whose trace errors along the observed trajectory are uniformly bounded in $W^{p,1}(0,T)$.

First, we consider the A-B schemes. The grid error \eqref{eq:ab_kernel_decay} has a one-sided boundary layer error near the initial endpoint. Here, we give the whole-trajectory estimate with a similar boundary layer error.

\begin{theorem}\label{thm:noaux_ab_generalization}
Consider an $M$-step A-B scheme with $M\geq2$ and order $p=M$. Assume that the scheme satisfies the discovery-stability condition \eqref{eq:ab_discovery_stability}. Let $\hat R>0$ be independent of $h$, and let $u_h\in\calA_{\hat R}$ be a zero-residual solution of the non-auxiliary loss, i.e., $J_h(u_h)=0$. Then, we have
\begin{equation}\label{eq:noaux_ab_l1}
\int_0^T |u_h(\Bx(t))-f(\Bx(t))|~ \odt\leq C_{M}\Big(hE_0+h^p+h^p\hat{R}\Big),
\end{equation}
where $E_0=\sum_{j=0}^{M-2}|u_h(\Bx_j)-f(\Bx_j)|$ measures the left boundary layer error, namely the discrepancy between the learned and exact vector-field values at the initial grid points, and $C_M$ is independent of $h$ and $\hat R$.
\end{theorem}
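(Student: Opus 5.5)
We combine the forced A-B estimate of \Cref{lem:forced_ab} with the $L^1$ sampling inequality \eqref{eq:sampling_l1} of \Cref{lem:sampling_l}. Set $\widetilde f_n=u_h(\Bx_n)$ for $0\le n\le N-1$; since $J_h(u_h)=0$, this vector solves \eqref{eq:ab_equations} exactly. Take the prescribed sequence $v_n=f(\Bx_n)=\dot x(t_n)$. The residual \eqref{eq:ab_residual_def} is then
\begin{equation*}
r_n=h^{-1}\sum_{m=0}^M\alpha_m x(t_{n-m})-\sum_{m=1}^M\beta_m \dot x(t_{n-m}),
\end{equation*}
which is the scaled local truncation error of the order-$p$ A-B method applied to the smooth trajectory. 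Taylor expansion then gives $|r_n|\le C h^p\|x^{(p+1)}\|_{L^\infty(0,T)}$ uniformly in $n$, with a constant depending only on $M$ and the trajectory, since $\Bx\in C^\infty$.

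Now apply \eqref{eq:forced_ab_estimate} to $e_i=g_h(t_i)$, where $g_h$ is the trace error \eqref{eq:error}. Its first sum is exactly $E_0$, so for $0\le i\le N-1$,
\begin{equation*}
|g_h(t_i)|\le C\Big(\theta^iE_0+\sum_{\ell=M}^{\min\{N,i+1\}}\theta^{i+1-\ell}|r_\ell|\Big)\le C\Big(\theta^iE_0+\frac{Ch^p}{1-\theta}\Big).
\end{equation*}
Multiply by $h$ and sum over $\calI_h=\{0,\ldots,N-1\}$. The geometric sum gives $h\sum_i\theta^i\le h/(1-\theta)$, and the residual part gives $hN\cdot Ch^p=CTh^p$. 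Hence
\begin{equation*}
h\sum_{n\in\calI_h}|g_h(t_n)|\le C(hE_0+h^p).
\end{equation*}
We fix one $\theta\in(\rho_{\rm AB},1)$, say $\theta=(1+\rho_{\rm AB})/2$, so that all constants depend only on $M$ and $T$.

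Finally, since $u_h\in\calA_{\hat R}$, we have $g_h\in W^{p,1}(0,T)$ with $|g_h|_{W^{p,1}}\le\hat R$. The estimate \eqref{eq:sampling_l1} then yields
\begin{equation*}
\|g_h\|_{L^1(0,T)}\le C\big(hE_0+h^p+h^p\hat R\big),
\end{equation*}
which is \eqref{eq:noaux_ab_l1}.

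The main obstacle is the truncation-residual bound, because of the sign and scaling conventions in $\Bq_h$. We must check that $q_n-\sum\beta_m f(\Bx_{n-m})$ is $h^{-1}$ times the classical local error $\sum\alpha_m x(t_{n-m})-h\sum\beta_m\dot x(t_{n-m})=O(h^{p+1})$, so that $r_n=O(h^p)$ rather than $O(h^{p+1})$. The key identity is $\dot x(t)=f(\Bx(t))$, which makes $v_n$ exactly the exact derivative samples. A secondary point is that the residual sum in \eqref{eq:forced_ab_estimate} must be bounded uniformly in $i$ by a geometric series; this works because $\theta<1$ is fixed independently of $N$. The statement holds for $h$ sufficiently small, as required by \Cref{lem:sampling_l}, and for larger $h$ the constant can simply be adjusted.
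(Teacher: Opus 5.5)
Your proposal is correct and follows the paper's proof essentially step for step. The paper likewise identifies the residual with the local truncation error $\tau_{h,n}=O(h^p)$, applies \Cref{lem:forced_ab} to get $|d_i|\le C(\theta^iE_0+h^p)$, sums to obtain $h\sum_i|d_i|\le C(hE_0+h^p)$, and concludes with the $L^1$ sampling inequality \eqref{eq:sampling_l1}; your extra remark about absorbing the finitely many large-$h$ cases into the constant is a small point of care the paper omits.
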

\begin{proof}
Given $h>0$, we can define the componentwise local truncation error by $\tau_{h,n}=h^{-1}\sum_{m=0}^M\alpha_m x(t_{n-m})-\sum_{m=0}^M\beta_m f(\Bx(t_{n-m}))$ for $n=M,\ldots,N$. Let $d_i:=u_h(\Bx(t_i))-f(\Bx(t_i))$ for $i=0,\ldots,N-1$. Since $u_h$ has zero non-auxiliary residual and the A-B scheme satisfies $\beta_0=0$, we have
\begin{equation*}
\sum_{m=1}^M\beta_m d_{n-m}=\tau_{h,n},\qquad n=M,\ldots,N.
\end{equation*}
The order-$p$ consistency gives $|\tau_{h,n}|\le C_\tau h^p$. Hence, by \Cref{lem:forced_ab}, for any fixed $\hat\theta\in(\rho_{\rm AB},1)$, we have
\begin{equation}\label{eq:thm4.3-1}
|d_i|\leq \hat{C}\left(\hat{\theta}^i E_0+\sum_{\ell=M}^{\min\{N,i+1\}}\hat{\theta}^{i+1-\ell}h^p\right)\leq \hat{C}_{\hat{\theta}}\left(\hat{\theta}^i E_0+h^p\right),
\end{equation}
where $i=0,\ldots,N-1$ and $\hat{C}_{\hat{\theta}}$ is independent of $h$. Hence,
\begin{equation}\label{eq:thm4.3-2}
h\sum_{i=0}^{N-1}|d_i|\leq C(hE_0+h\sum_{i=0}^{N-1}h^p)\leq C_T(hE_0+h^p),
\end{equation}
where the constant $C_T$ depends on $T$ but independent of $h$. Applying the $L^1$ sampling estimate \eqref{eq:sampling_l1} to $g_h$ with $\calI_h=\{0,\ldots,N-1\}$ gives
\begin{equation}\label{eq:thm4.3-3}
\int_0^T |g_h(t)|~\odt\leq C\Big(h\sum_{i=0}^{N-1}|d_i|+h^p|g_h|_{W^{p,1}(0,T)}\Big).
\end{equation}
Since $u_h\in\calA_{\hat R}$, we have $|g_h|_{W^{p,1}(0,T)}\leq \hat R$. Combining \eqref{eq:thm4.3-2} and \eqref{eq:thm4.3-3} yields \eqref{eq:noaux_ab_l1}.
\end{proof}

The global estimate above contains the boundary layer contribution $hE_0$, which reflects the fact that the first $M-1$ grid values are not determined by the non-auxiliary A-B residual equations. Thus, if $E_0=O(1)$, the global trajectory estimate is generally restricted to first order, regardless of the formal order $p$ of the A-B scheme. On the other hand, this loss of order is localized near the initial endpoint. By the A-B kernel-localization estimate (\Cref{thm:ab_kernel}), the effect of the boundary error is reduced by a factor $\theta^i$ at the $i$-th grid point. The estimate suggests that the influence of the boundary error should decay exponentially away from the initial point. In particular, on a fixed subinterval $[\delta,T]$ with $\delta>0$, the proof below shows that the accumulated boundary layer contribution is controlled by a term proportional to $h\theta^{\delta/h}E_0$. Since $\theta^{\delta/h}=\exp(\delta\log\theta/h)$ decays exponentially as $h\to0^+$ for $0<\theta<1$, this term can be absorbed into the $O(h^p)$ truncation and sampling terms when $E_0$ remains bounded. This leads to the following interior trajectory estimate.

\begin{theorem}\label{thm:noaux_ab_generalization_1}
Under the assumptions and notation of \Cref{thm:noaux_ab_generalization}, suppose that the boundary layer error $E_0(h)$ remains bounded as $h\to0^+$. Then, for every fixed $\delta\in(0,T)$ and $\theta\in(\rho_{\rm AB},1)$, there exists $\hat{h}>0$ such that, for all $0<h<\hat{h}$,
\begin{equation}\label{eq:thm_0}
\int_\delta^T |u_h(\Bx(t))-f(\Bx(t))|~ \odt\leq C_{M,\theta}(C+\hat{R})h^p,
\end{equation}
where the constants $C_{M,\theta}$ and $C$ are independent of $h$ and $\hat R$.
\end{theorem}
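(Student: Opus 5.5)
The argument follows the proof of \Cref{thm:noaux_ab_generalization}, but restricted to the interval $[\delta,T]$. The extra work is to exploit the factor $\hat\theta^i$ in \eqref{eq:thm4.3-1}. As before, set $d_i=u_h(\Bx(t_i))-f(\Bx(t_i))$. Since $J_h(u_h)=0$, the sequence $d_i$ satisfies the forced A-B recursion with residual $\tau_{h,n}=O(h^p)$. By \Cref{lem:forced_ab}, with a fixed $\hat\theta\in(\rho_{\rm AB},\theta)$, this gives
\begin{equation*}
|d_i|\leq \hat C\big(\hat\theta^iE_0+h^p\big),\qquad 0\leq i\leq N-1.
\end{equation*}

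The first step is the local sampling inequality. Let $i_\delta=\lceil \delta/(2h)\rceil$, and apply the remark after \Cref{lem:sampling_l} on the fixed subinterval $[\delta/2,T]$. On this interval the sample set $\{t_i: i_\delta\leq i\leq N-1\}$ is quasi-uniform, because only the terminal point is missing. For $h$ small this yields
\begin{equation*}
\int_\delta^T|g_h|\,\odt\leq\int_{\delta/2}^T|g_h|\,\odt\leq C\Big(h\sum_{i=i_\delta}^{N-1}|d_i|+h^p|g_h|_{W^{p,1}(\delta/2,T)}\Big).
\end{equation*}
Moving to $[\delta/2,T]$ rather than working directly on $[\delta,T]$ is only a convenience: it guarantees that the grid points used really lie inside the interval, and since $\delta$ is fixed it changes nothing in the constants. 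The seminorm term is at most $\hat R$.

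The second step is to sum the grid errors:
\begin{equation*}
h\sum_{i\geq i_\delta}|d_i|\leq \hat C\Big(hE_0\frac{\hat\theta^{i_\delta}}{1-\hat\theta}+Th^p\Big).
\end{equation*}
We have $\hat\theta^{i_\delta}\leq \hat\theta^{\delta/(2h)}=(\hat\theta^{1/2})^{\delta/h}$. I choose $\hat\theta$ so that $\hat\theta^{1/2}<1$, which is automatic, and I also need the statement's $\theta$ to appear. To make it appear, pick $\hat\theta$ with $\rho_{\rm AB}<\hat\theta\leq\theta^2$ when that is possible. Otherwise, use the fact that any $\theta'<1$ satisfies $h\,(\theta')^{\delta/h}\leq h^p$ for $h$ small. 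The boundary term is then at most $E_0\,h(\theta')^{\delta/h}/(1-\hat\theta)$.

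The third step, which I expect to be the only real obstacle, is absorbing the boundary term, and it is where $\hat h$ is defined. Since $E_0(h)\leq \bar E$ is bounded, it suffices that
\begin{equation*}
h^{1-p}\,(\theta')^{\delta/h}=\exp\big((1-p)\log h+\delta\log\theta'/h\big)\to0 .
\end{equation*}
This holds because the exponent tends to $-\infty$. So there is an $\hat h$, depending on $\delta$, $\theta$, $p$ and $\bar E$, such that for $h<\hat h$ the boundary contribution is at most $h^p$. The bound $\bar E$ goes into the constant $C$.

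Collecting the three steps gives $\int_\delta^T|g_h|\,\odt\leq C_{M,\theta}(C+\hat R)h^p$. Here $C_{M,\theta}$ collects the constant from \Cref{lem:forced_ab}, the factor $1/(1-\hat\theta)$ and the sampling constant, and $C$ collects $T$, $C_\tau$ and $\bar E$.
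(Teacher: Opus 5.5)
Your proposal is correct and follows the paper's proof. In both, you apply \Cref{lem:forced_ab} to get $|d_i|\le C(\theta^iE_0+h^p)$, use the $L^1$ sampling inequality on a fixed subinterval, sum the geometric tail to obtain a term of size $h\theta^{c\delta/h}E_0$, and absorb it into $h^p$ using the boundedness of $E_0$. The only differences are cosmetic: the paper applies the lemma directly with the given $\theta$ and samples on $[\delta,T]$ itself, so your buffer interval $[\delta/2,T]$ and your switching between $\hat\theta$, $\theta^2$ and $\theta'$ are unnecessary, though harmless.
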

\begin{proof}
Fix $\delta\in(0,T)$ and $\theta\in(\rho_{\rm AB},1)$. Similar to \eqref{eq:thm4.3-1}, we have
\begin{equation}\label{eq:thm4.4-3}
|d_i|\leq  \hat{C}_{\theta}(\theta^i E_0+h^p),\qquad i=0,\ldots,N-1.
\end{equation}
Using \eqref{eq:sampling_l1} on $[\delta,T]$, we obtain
\begin{equation}\label{eq:thm4.4-4}
\int_\delta^T |g_h(t)|~\odt\leq C_0\Big(h\sum_{i:t_i\in[\delta,T]}|d_i|+h^p|g_h|_{W^{p,1}(\delta,T)}\Big).
\end{equation}
For $t_i\in[\delta,T]$, we have $i\geq\delta/h$. Hence
\begin{equation}\label{eq:thm4.4-5}
h\sum_{i:t_i\in[\delta,T]}\theta^iE_0\leq h\sum_{i\geq\delta/h}\theta^iE_0\leq C_\theta h\theta^{\delta/h}E_0.
\end{equation}
Combining \eqref{eq:thm4.4-3}-\eqref{eq:thm4.4-5} yields 
\begin{equation}\label{eq:noaux_ab_interior}
\int_\delta^T |g_h(t)|~\odt\leq \hat{C}\Big(h\theta^{\delta/h}E_0+h^p+h^p\hat{R}\Big),
\end{equation}
where $\hat C$ is independent of $h$, $E_0$, and $\hat R$. By the boundedness assumption of $E_0$, there exist $\tilde{C}>0$ and $h_1>0$ such that $E_0(h)\leq \tilde{C}$ for $0<h<h_1$. Hence, for $0<\theta<1$, we have
\begin{equation}
h\theta^{\delta/h}E_0=E_0 h \exp\Big(\frac{\delta\log\theta}{h}\Big)\leq \tilde{C} h \exp\left(\frac{\delta\log\theta}{h}\right).
\end{equation}
The exponential factor $\exp\left(\frac{\delta\log\theta}{h}\right)$ decays faster than any algebraic power of $h$. In particular, since $\exp\left(\frac{\delta\log\theta}{h}\right)=o(h^{p-1})$ for $h\to0^+$, we have $h\theta^{\delta/h}E_0=o(h^p)$ for $h\to0^+$. Consequently, there exists $\hat{h}\in(0,h_1]$ such that, for all
$0<h<\hat{h}$,
\begin{equation}\label{eq:boundary_layer_absorbed}
h\theta^{\delta/h}E_0\leq \tilde{C} h^p .
\end{equation}
Since $u_h\in\calA_{\hat R}$, we have $|g_h|_{W^{p,1}(0,T)}\leq \hat R$. Substituting this bound and combining \eqref{eq:boundary_layer_absorbed} with \eqref{eq:noaux_ab_interior} yields \eqref{eq:thm_0}.
\end{proof}

Next, we consider the non-auxiliary A-M schemes. Unlike the A-B case, the kernel of the A-M discovery matrix exhibits a two-sided boundary layer structure, so both the initial and terminal endpoint errors enter the global trajectory estimate. The same sampling argument, combined with the A-M kernel-localization estimate, yields the following result.

\begin{theorem}\label{thm:noaux_am_generalization}
Consider an $M$-step A-M scheme with order $p=M+1$. Assume that the scheme satisfies the discovery-stability condition \eqref{eq:am_discovery-stability}. Let $\hat R>0$ be independent of $h$, and let $u_h\in\calA_{\hat R}$ be a zero-residual solution of the non-auxiliary loss, i.e., $J_h(u_h)=0$. Then, we have
\begin{equation}\label{eq:noaux_am_l1}
\int_0^T |u_h(\Bx(t))-f(\Bx(t))|~\odt\leq C_M\Big(h(E_0+E_T)+h^p+h^p\hat R\Big),
\end{equation}
where $E_0=\sum_{j=0}^{M-1}|u_h(\Bx_j)-f(\Bx_j)|$ and $E_T=\sum_{j=N-M+1}^{N}|u_h(\Bx_j)-f(\Bx_j)|$ measure the left and right boundary layer errors, respectively. Here $C_M$ is independent of $h$, $u_h$, $E_0$, $E_T$, and $\hat R$.

Moreover, suppose that the boundary layer errors $E_0$ and $E_T$ remain bounded as $h\to0^+$. Then, for every fixed $\delta\in(0,T/2)$ and $\theta\in(\rho_{\rm AM},1)$, there exists $\hat{h}>0$ such that, for all $0<h<\hat{h}$,
\begin{equation}\label{eq:noaux_am_interior}
\int_\delta^{T-\delta} |u_h(\Bx(t))-f(\Bx(t))|~\odt\leq C_{M,\theta}(C+\hat R)h^p,
\end{equation}
where the constants $C_{M,\theta}$ and $C$ are independent of $h$ and $\hat R$.
\end{theorem}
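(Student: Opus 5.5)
The proof follows the A-B argument of \Cref{thm:noaux_ab_generalization} and \Cref{thm:noaux_ab_generalization_1}, with \Cref{lem:forced_am} in place of \Cref{lem:forced_ab}.

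\textbf{Step 1: the forced error equation.} Define the local truncation errors
\[
\tau_{h,n}=h^{-1}\sum_{m=0}^M\alpha_m x(t_{n-m})-\sum_{m=0}^M\beta_m f(\Bx(t_{n-m})),\qquad n=M,\ldots,N,
\]
and the grid errors $d_i=u_h(\Bx_i)-f(\Bx_i)$ for $i=0,\ldots,N$. Since $J_h(u_h)=0$, the vector $(u_h(\Bx_0),\ldots,u_h(\Bx_N))^\top$ solves \eqref{eq:am_equations}. Take $v_n=f(\Bx_n)$ in \eqref{eq:am_residual_def}; then $r_n=\tau_{h,n}$ and
\[
\sum_{m=0}^M\beta_m d_{n-m}=\tau_{h,n},\qquad n=M,\ldots,N.
\]
Order-$p$ consistency with $\Bx\in C^\infty$ gives $|\tau_{h,n}|\le C_\tau h^p$.

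\textbf{Step 2: pointwise grid bound.} Apply \Cref{lem:forced_am} with a fixed $\theta\in(\rho_{\rm AM},1)$. The forcing sum is controlled by the geometric series
\[
\sum_{\ell=M}^N\theta^{|i-\ell|}C_\tau h^p\le \frac{2C_\tau}{1-\theta}\,h^p .
\]
This bound is uniform in $i$ and $N$, and obtaining it is the one place where the two-sided sum requires a little care. It gives
\[
|d_i|\le C_\theta\big(\theta^iE_0+\theta^{N-i}E_T+h^p\big),\qquad i=0,\ldots,N.
\]

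\textbf{Step 3: global $L^1$ estimate.} Summing Step 2 over $i$ uses $h\sum_i\theta^i\le h/(1-\theta)$, the same bound for $h\sum_i\theta^{N-i}$, and $hNh^p=Th^p$. The result is
\[
h\sum_{i=0}^N|d_i|\le C\big(h(E_0+E_T)+h^p\big).
\]
Then apply the $L^1$ sampling inequality \eqref{eq:sampling_l1} of \Cref{lem:sampling_l} with $\calI_h=\{0,\ldots,N\}$ to $g_h$, together with $|g_h|_{W^{p,1}(0,T)}\le\hat R$. This yields \eqref{eq:noaux_am_l1}.

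\textbf{Step 4: interior estimate.} Apply \eqref{eq:sampling_l1} on the fixed subinterval $[\delta,T-\delta]$; by the remark after \Cref{lem:sampling_l}, the constant there is independent of $h$. The sampled indices satisfy $i\ge\delta/h$ and $N-i\ge\delta/h$. Hence
\[
h\sum_{t_i\in[\delta,T-\delta]}\big(\theta^iE_0+\theta^{N-i}E_T\big)\le C_\theta\, h\,\theta^{\delta/h}(E_0+E_T).
\]
Assume $E_0$ and $E_T$ are bounded for small $h$. Since $\theta^{\delta/h}=\exp(\delta\log\theta/h)=o(h^{p-1})$, there is $\hat h$ such that $h\theta^{\delta/h}(E_0+E_T)\le Ch^p$ for all $h<\hat h$. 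Adding the $h^p$ and $h^p\hat R$ terms from Step 3 gives \eqref{eq:noaux_am_interior}.

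The main obstacle is only to keep all constants independent of $N$ and $h$. This uniformity is supplied by \Cref{lem:forced_am} together with the geometric series bounds above.
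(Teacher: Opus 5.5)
Your proposal is correct and follows the paper's proof essentially step for step. The steps are: the forced A-M recursion with the local truncation error as the residual, \Cref{lem:forced_am} with a fixed $\theta\in(\rho_{\rm AM},1)$, the $L^1$ sampling inequality \eqref{eq:sampling_l1} on $[0,T]$ and on $[\delta,T-\delta]$, and absorption of $h\theta^{\delta/h}(E_0+E_T)$ into $O(h^p)$. Your explicit geometric-series bound for the two-sided forcing sum only spells out a step the paper compresses into one sentence (a trivial nit: $h\sum_{i=0}^N h^p=(T+h)h^p$ rather than $Th^p$, which changes nothing).
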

\begin{proof}
This proof is similar to \Cref{thm:noaux_ab_generalization}, so we will only present the key steps here. The one-step A-M scheme is excluded from this result because it does not satisfy the discovery-stability condition \eqref{eq:am_discovery-stability}. Given $h>0$, we have $\sum_{m=0}^M\beta_m d_{n-m}=\tau_{h,n}$ for $n=M,\ldots,N$, where $\tau_{h,n}$ is the componentwise local truncation error and $d_{n-m}=u_h(\Bx(t_{n-m}))-f(\Bx(t_{n-m}))$.

Let $\hat\theta\in(\rho_{\rm AM},1)$ be fixed. By the forced A-M estimate in \Cref{lem:forced_am} and the hypothesis that the A-M scheme has order $p$, we have
\begin{equation}\label{eq:thm4.4-am-1}
h\sum_{i=0}^{N}|d_i|\leq C_M\Big(h(E_0+E_T)+h^p\Big),
\end{equation}
where the constant $C_M$ is also independent of $h$. Applying the $L^1$ sampling estimate \eqref{eq:sampling_l1} to $g_h$ with $\calI_h=\{0,\ldots,N\}$ gives
\begin{equation}\label{eq:thm4.4-am-2}
\int_0^T |g_h(t)|~\odt\leq C\Big(h\sum_{i=0}^{N}|d_i|+h^p|g_h|_{W^{p,1}(0,T)}\Big)\leq C\Big(h\sum_{i=0}^{N}|d_i|+\hat R h^p\Big).
\end{equation}
Combining \eqref{eq:thm4.4-am-1} and \eqref{eq:thm4.4-am-2} yields \eqref{eq:noaux_am_l1}.

Next, for the interior estimate, fix $\delta\in(0,T/2)$ and $\theta\in(\rho_{\rm AM},1)$. Applying \Cref{lem:forced_am} with this $\theta$ gives
\begin{equation}\label{eq:thm4.4-am-3}
|d_i|\leq C_{M,\theta}\left(\theta^iE_0+\theta^{N-i}E_T+h^p\right),\qquad i=0,\ldots,N.
\end{equation}
Using the $L^1$ sampling estimate on $[\delta,T-\delta]$, we obtain
\begin{equation}\label{eq:thm4.4-am-4}
\int_\delta^{T-\delta} |g_h(t)|~\odt\leq C\Big(h\sum_{i:t_i\in[\delta,T-\delta]}|d_i|+h^p|g_h|_{W^{p,1}(\delta,T-\delta)}\Big).
\end{equation}
For $t_i\in[\delta,T-\delta]$, we have $i\geq \delta/h$ and $N-i\geq \delta/h$. Therefore,
\begin{equation}\label{eq:thm4.4-am-5}
h\sum_{i:t_i\in[\delta,T-\delta]}(\theta^iE_0+\theta^{N-i}E_T)\leq C_\theta h\theta^{\delta/h}(E_0+E_T).
\end{equation}
Combining \eqref{eq:thm4.4-am-3}--\eqref{eq:thm4.4-am-5} with the last two bounds yields \begin{equation}
\int_\delta^{T-\delta} |g_h(t)|~\odt\leq C_{M,\theta}\Big(h\theta^{\delta/h}(E_0+E_T)+h^p+h^p\hat R\Big).
\end{equation}
By the boundedness assumption of $E_0$ and $E_T$, similar to the proof of \Cref{thm:noaux_ab_generalization_1}, there exist $\tilde{C}>0$ and $h_1>0$, such that $h\theta^{\delta/h}(E_0+E_T)\leq \tilde{C}h^p$ for $0<h<h_1$. This proves \eqref{eq:noaux_am_interior}.
\end{proof}

The auxiliary error estimate (\Cref{thm:aux_generalization}) and non-auxiliary error estimate (\Cref{thm:noaux_ab_generalization}-\Cref{thm:noaux_am_generalization}) differ in the source of their leading error terms. In the auxiliary formulation, the auxiliary conditions remove the kernel space of the discovery matrix, so the whole-trajectory estimate is obtained directly from a grid-level error bound and a sampling inequality. If the condition number $\kappa_2(\BA_h)$ is uniformly bounded, the approximation error satisfies $e_{\calA}=O(h^p)$, and the trajectory regularity bound is uniform in $h$, then the auxiliary estimate gives an $O(h^p)$ whole-trajectory error. In the non-auxiliary formulation, the residual equations leave boundary degrees of freedom undetermined. Kernel localization confines these degrees of freedom to endpoint layers, but their contribution still appears in the global trajectory error.

\section{Numerical experiments}\label{sec:numerics}

This section presents a noise-free single-trajectory experiment. The purpose is to compare the observed trajectory error with the convergence behavior predicted by the auxiliary and non-auxiliary trajectory estimates in \Cref{sec:generalization}. We report trajectory errors for both formulations and for several A-B and A-M schemes. The zero-residual assumption in the non-auxiliary estimates is used to isolate the discretization and boundary-layer errors; in the neural-network experiments, the residual is minimized only approximately, and the remaining optimization error is treated as an additional numerical error.

\subsection{Error metrics}
Let $\hat{\Bf}_h=[\hat f_{h,1}~\ldots~\hat f_{h,d}]^\top\in\mathbb{R}^d$ be the network approximating the original governing function $\Bf$. On a single trajectory, we define the trajectory error
\begin{equation}\label{eq:num_traj_error_single}
e_{{\rm traj}}=\frac{1}{d}\sum_{j=1}^d\frac{\int_0^T\left|\hat f_{h,j}(\Bx(t))-f_j(\Bx(t))\right|~\odt}{\int_0^T\left|f_j(\Bx(t))\right|~\odt},
\end{equation}
where the integrals are approximated by the composite trapezoidal rule.

\subsection{Experimental settings}

The common experimental settings are summarized as follows.
\begin{itemize}
\item {\em Environment.}
The experiments are performed in the Python 3.12.3 environment. We use PyTorch for neural network implementation.

\item {\em Initialization of variables.}
The network parameters $\{\BW_l, \Bb_l\}$ are randomly initialized with uniform distribution by
\begin{equation*}
\BW_l, \Bb_l\sim U(-W^{-1/2},W^{-1/2}).
\end{equation*}

\item {\em Optimizer and hyperparameters.}
The optimization is performed by Adam for $3\times 10^4$ epochs. Since each scalar problem contains at most $N+1$ residual and auxiliary constraints, all constraints are used in each optimization step. Therefore, the training is effectively full-batch. The learning rate is decreased from $10^{-2}$ to $10^{-5}$ during training.

\item {\em Randomness.}
To reduce the effect of initialization randomness, each configuration is trained with multiple random seeds, which correspond to different initial guesses for the optimization. In the convergence plots below, we report the mean errors over the available seed runs for each method, grid size, and formulation.
\end{itemize}

\subsection{Noise-free trajectory data}\label{sec:numerics_case1}
We consider the model problem
\begin{equation*}
\dot x_1=x_2,\quad
\dot x_2=-x_1,\quad
\dot x_3=1/x_2^2,\quad t\in[0,T],
\end{equation*}
with initial condition $[x_1,x_2,x_3]_{t=0}=[0,1,0]$ and $T=1$. The exact solution is $x(t)=(\sin t,\cos t,\tan t)$. We assign the time step size $h=1/N$ with $N=4,6,8,10,12,14,16$. The governing function is approximated by an FNN with depth $L=5$, width $W=640$, and scaled Softplus activation. For each component of the vector field, we train an independent scalar network $u_{\theta,j}:\mathbb R^3\to\mathbb R$ for $j=1,2,3$. We test the A-B and A-M formulations ($M=1,2,3$) with auxiliary conditions and without auxiliary conditions. The auxiliary conditions are imposed by one-sided finite-difference approximations near the initial time. The training data consist of a single noise-free trajectory sampled at $\Bx_i=\Bx(t_i)$ with $t_i=ih$, $i=0,\ldots,N$. For each scalar component, the non-auxiliary losses contain $N-M+1$ LMM residual points. With auxiliary conditions, the corresponding numbers become $N$ for A-B schemes and $N+1$ for A-M schemes. 

\begin{figure}[htbp!]
\centering
\includegraphics[width=0.78\textwidth]{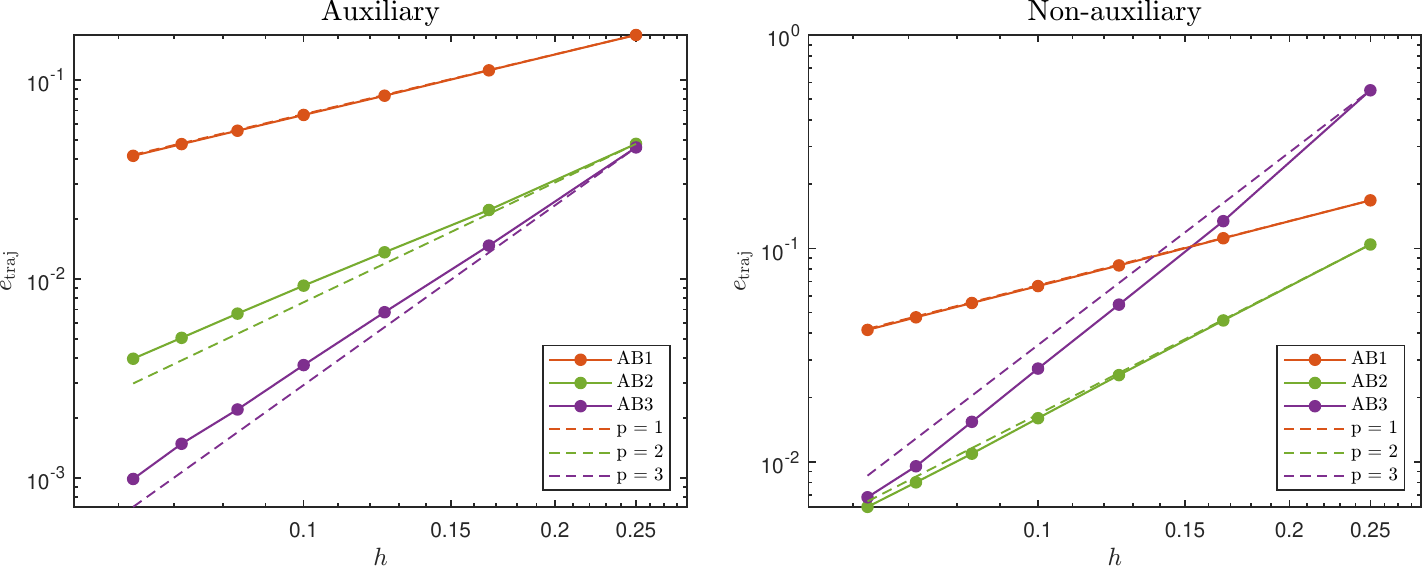}
\caption{Mean trajectory error $e_{\rm traj}$ over random seeds for A-B schemes. Dashed lines are scaled reference slopes with rates $p=1,2,3$.}
\label{fig:convergence_ab}
\end{figure}

\Cref{fig:convergence_ab} shows the mean trajectory error $e_{\rm traj}$ for auxiliary and non-auxiliary A-B schemes. For $M=1$, the two formulations coincide, consistent with the fact that the one-step A-B discovery matrix has a trivial kernel and hence no additional boundary degrees of freedom. For $M=2$ and $M=3$, both formulations show decreasing trajectory errors as $h$ decreases, with observed slopes broadly consistent with the corresponding reference orders. The non-auxiliary results are close to the second- and third-order reference slopes for $M=2$ and $M=3$, respectively. Although \Cref{thm:noaux_ab_generalization} contains the additional boundary-layer contribution $hE_0$, the observed behavior suggests that this term is not the leading contribution in this noise-free experiment.

\begin{figure}[htbp!]
\centering
\includegraphics[width=0.78\textwidth]{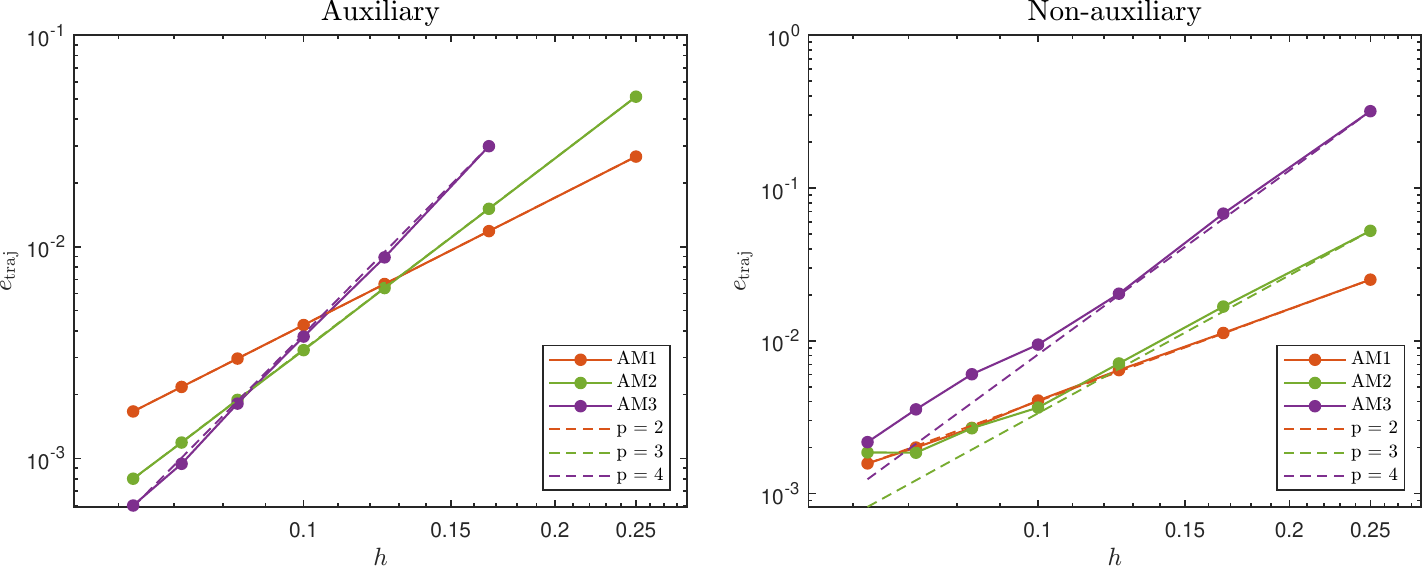}
\caption{Mean trajectory error $e_{\rm traj}$ over random seeds for A-M schemes. Dashed lines are scaled reference slopes with rates $p=2,3,4$.}
\label{fig:convergence_am}
\end{figure}

\Cref{fig:convergence_am} shows the mean trajectory error $e_{\rm traj}$ for A-M schemes. The one-step A-M scheme is not covered by the non-auxiliary A-M theory because its discovery polynomial has a unit-modulus root, but it is included as a numerical reference. For $M=1$, both formulations exhibit approximately second-order convergence. For $M=2$ and $M=3$, the trajectory errors decrease over the available mesh sizes and follow the corresponding reference slopes. The auxiliary A-M result with $M=3$ and $h=1/4$ is omitted because the one-sided auxiliary finite-difference conditions require trajectory samples beyond those available on this grid. These observations are consistent with the theoretical distinction between the two formulations. The auxiliary conditions remove the kernel degrees of freedom, whereas the non-auxiliary estimate in \Cref{thm:noaux_am_generalization} contains the endpoint-layer contribution $h(E_0+E_T)$. If this contribution were $O(h)$ and dominant, the global trajectory error could be limited to first order. The observed higher-order decay suggests that the endpoint-layer term is not dominant in the present experiment, although both approximation and residual-minimization errors still affect the finite-mesh behavior.

\section{Conclusion}\label{sec:conclusion}
This paper studied two mechanisms in LMM-based discovery of dynamical systems that are not captured by existing grid-level convergence theory: the boundary-layer localization of nonuniqueness in non-auxiliary discovery systems and the conversion of grid-level estimates into error bounds along the observed trajectory. For non-auxiliary A-B and A-M formulations, the corresponding discovery systems are underdetermined, but their kernels are highly structured. Under the corresponding discovery-stability conditions, the differences between zero-residual grid solutions of the A-B scheme are exponentially localized near the initial indices, while the corresponding differences for A-M schemes exhibit two-sided boundary layers near the initial and terminal indices. These results indicate that the localized nonuniqueness of the non-auxiliary formulation originates from the boundary layer structure of the LMM algebraic system itself.

We also derived whole-trajectory error estimates along a fixed observed trajectory. After restriction to the trajectory, the vector-field error becomes a one-dimensional trace error in time, allowing deterministic sampling inequalities to convert grid estimates into continuous-time bounds. For the auxiliary formulation, this yields an $O(h^p)$ trajectory estimate under the stated approximation, conditioning, and trace-regularity assumptions. For non-auxiliary A-B and A-M formulations, the global estimates contain additional boundary-layer terms, respectively $O(hE_0)$ and $O(h(E_0+E_T))$. These terms explain the possible loss of global order, while their exponential damping on fixed interior subintervals recovers the $O(h^p)$ behavior when the boundary-layer errors remain bounded. The numerical experiments support these conclusions.

Future work includes establishing a connection between the trajectory regularity assumptions used in our analysis and verifiable properties of trained neural networks, such as norm control, spectral bias, or explicit regularization. Another direction is to extend the current deterministic estimates to noisy trajectory data and cases of model mismatch. These extensions will further elucidate the stability and generalizability of LMM-based discovery methods in practical learning environments.

\bibliographystyle{siamplain}
\bibliography{references}

\end{document}